\documentclass{amsart}
\usepackage{graphicx} 
\usepackage{tikz-cd}
\usepackage[utf8]{inputenc}
\usepackage{amsmath,amssymb,amsthm}

\def\Q{{\mathbb Q}}
\def\Z{{\mathbb Z}}
\def\C{{\mathbb C}}

\newtheorem{thm}{Theorem}

\newtheorem{lem}[thm]{Lemma}
\newtheorem{coro}[thm]{Corollary}
\newtheorem{prop}[thm]{Proposition}

\title{Galois Cohomology of Reductive Groups over Function Fields over $\C((t))$}
\author{Dylon Chow}

\begin{document}

\begin{abstract}
Over a global field (number field or function field of a curve over a finite field), theorems for the Galois cohomology of algebraic groups have long been known. For $F$ the function field of a curve over the formal series field $\C((t))$, under an additional assumption on the curve, we establish an explicit description of the localization map in Galois cohomology. This is achieved by extending the theory of $B(F,G)$ due to Kottwitz. 
\end{abstract}

\maketitle

\section{Introduction}

Let $k=\C((t))$ and let $F$ be the function field of a smooth, projective, geometrically integral curve $X$ over $k$. Let $G$ be a connected reductive group over $F$. This article discusses, under certain conditions on $X$, a pointed set $B(G)$ attached to $G$ and the field $F$, as well as a pointed set $B(G)$ attached to $G$ and the completions of $F$ at closed points of $X$. These results are in parallel with those established by Kottwitz \cite{kottwitz2014} for local and global fields. 

We let $X^{(1)}$ denote the set of closed points on the $k$-curve $X$. This is in bijection with the set $V_F$ of places (discrete valuations of rank one) of the field $F=k(X)$ that are trivial on $k$. From our results on $B(G)$, we obtain results on the first Galois cohomology set $H^1(F,G)$ of $G$ and the localization map \[H^1(F,G) \to \prod_{v \in V_F} H^1(F_v,G).\] This extends results of Borovoi \cite{borovoi}, Kottwitz \cite{kottwitz2014}, and Borovoi-Kaletha \cite{borkal}, who worked over global fields.

The analogous localization map over global fields has been much studied in connection with the local-global principle. A number of recent articles address Galois cohomology over what are called fields of arithmetic type. These are fields that are not necessarily global fields but that share many properties of global fields. (See \cite{parimala} for more on this topic.) The field $F$ is a field of arithmetic type. In particular, $F$ has cohomological dimension $2$ and presents analogies with global fields of positive characteristic, namely, with function fields in one variable over a finite field.

The main local result is Theorem \ref{local}, which gives a bijection between the Galois coinvariants of Borovoi's fundamental group with the set of basic elements in $B(F,G)$. The main global result is Theorem \ref{global}, which gives a bijection between the set of basic elements in $B(F,G)$ and the Galois coinvariants of Borovoi's fundamental group with the tensor product of a certain subset of the divisor group of $X$. For global function fields, this result is due to Kottwitz \cite[Proposition 13.1, Corollary 15.2]{kottwitz2014}, whose proofs we adapt to the context of function fields over $\C((t))$. For global function fields there is a natural topological generator of the absolute Galois group of a finite field, namely the Frobenius automorphism. The absolute Galois group of $\C((t))$ is also isomorphic to $\widehat{\Z}$, the profinite completion of $\Z$, but the bijections we obtain are not canonical because they depend on a choice of a canonical generator of the absolute Galois group.

For other results related to those obtained in this article, see the paper \cite{ctharari}, which establishes arithmetic duality theorems for the Galois cohomology of tori and finite Galois modules in the context of function fields over $\C((t))$.

Section 2 of this article discusses the local theory, while section 3 discusses the global theory.

\section{Local Theory}


\subsection{Review of Tate-Nakayama triples} Throughout this article, if $\Gamma$ is a finite group and $M$ is a $\Gamma$-module, $H^r(\Gamma,M)$ denotes the $r$th Tate cohomology group of $\Gamma$ with coefficients in $M$. We recall the notion of a Tate-Nakayama triple due to Kottwitz \cite{kottwitz2014}. Let $G$ be a finite group. Let $X$ and $A$ be $G$-modules, and let $\alpha \in H^2(G,\mathrm{Hom}(X,A))$ We say that $(X,A,\alpha)$ is a \textit{weak Tate-Nakayama triple for $G$} if the following condition holds for every subgroup $G'$ of $G$: 
\begin{itemize}
    \item For all $r \in \Z$, cup product with $\mathrm{Res}_{G/G'}(\alpha)$ induces isomorphisms \[H^r(G',X) \to H^{r+2}(G',A).\]
\end{itemize}
We say that $(X,A,\alpha)$ is \textit{rigid} if 
\begin{itemize}
    \item $H^1(G',\mathrm{Hom}(X,A))$ is trivial for every subgroup $G'$ of $G$.
\end{itemize}
A \textit{Tate-Nakayama triple} is a weak Tate-Nakayama triple that is also rigid.

\subsection{Notation} 

We consider a function field over $k=\C((t))$. We let $F$ be a completion of this field with respect to a discrete valuation on the field that is trivial on $\C((t))$. The residue field of $F$ is a finite extension of $\C((t))$. We let $\overline{k}=\cup_n \C((t^{1/n}))$ and choose an algebraic closure $\overline{F}$ of $F$. We let $\Gamma(\overline{F}/F)$ denote its Galois group. If $E$ is a field, $\mathrm{Br}(E)$ denotes the Brauer group of $E$.

\subsection{Review of local class field theory}

Although $F$ is not a local field, the main results of local class field theory hold for $F$. This is summarized as follows, see for instance, \cite[Section 6.3]{GS}. First we note that $k=\C((t))$ is quasi-finite, i.e., it is perfect and its Galois group is isomorphic to the profinite completion $\widehat{\Z}$ of $\Z$. We fix a topological generator $\sigma=\sigma_k$ of $\Gamma(\overline{k}/k)$, or equivalently, a fixed isomorphism $\varphi_k \colon \Xi(\Gamma_k) \to \Q/\Z$ where $\Xi(\Gamma_k)$ is the character group $\text{Hom}_{cts}(\Gamma_k,\Q/\Z)$ of $\Gamma_k$. The relation between $\sigma$ and $\varphi_k$ is that $\varphi_k(\chi)=\chi(\sigma)$ for all $\chi \in \Xi(\Gamma_k)$. A finite extension $l$ of a quasi-finite field $k$ is again quasi-finite with generator $\sigma_l = \sigma_k^{[l:k]}$.

We refer to \cite{ADT} for the definition of a class formation that is used in this article.

\begin{prop}
    The pair $(\Gamma(\overline{F}/F),\overline{F}^\times)$ is a class formation.
\end{prop}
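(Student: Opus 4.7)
The plan is to verify the axioms of a class formation from \cite{ADT}: for every open subgroup $U \subseteq \Gamma(\overline{F}/F)$, corresponding to a finite extension $K/F$ inside $\overline{F}$, I need the vanishing $H^1(U,\overline{F}^\times)=0$ together with an invariant isomorphism $\inv_U : H^2(U,\overline{F}^\times) = \Br(K) \to \Q/\Z$ satisfying the compatibility $\inv_V \circ \mathrm{Res}^U_V = [U:V]\cdot \inv_U$ for nested open subgroups $V\subseteq U$. The first axiom is Hilbert 90 and is unconditional.

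To construct $\inv_K$, I would exploit that every finite extension $K/F$ is again complete under a discrete valuation whose residue field $\kappa_K$ is a finite extension of $\C((t))$, hence quasi-finite. The maximal unramified extension $K^{\mathrm{nr}}$ therefore has algebraically closed residue field, so $\mathrm{cd}(K^{\mathrm{nr}}) \leq 1$ and $\Br(K^{\mathrm{nr}})=0$. By inflation--restriction this identifies $\Br(K)$ with $H^2(\Gamma(K^{\mathrm{nr}}/K),(K^{\mathrm{nr}})^\times)$; splitting by the valuation and using the standard vanishing of $H^2$ of the units then reduces the computation to $H^2(\widehat{\Z},\Z) \cong H^1(\widehat{\Z},\Q/\Z) \cong \Q/\Z$, where the last identification is pinned down by the chosen topological generator $\sigma_{\kappa_K} = \sigma_k^{[\kappa_K:k]}$.

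The principal obstacle is the compatibility $\inv_L \circ \mathrm{Res}^U_V = [L:K]\cdot \inv_K$ on Brauer groups. For $L/K$ with ramification index $e$ and residue degree $f$, the relation $\sigma_{\kappa_L} = \sigma_{\kappa_K}^f$ contributes a factor of $f$ to the invariant under restriction, while the extension of the normalized valuation contributes the factor of $e$, summing to $ef = [L:K]$. All of this is the content of local class field theory for complete discretely valued fields with quasi-finite residue field and is carried out in \cite[Section 6.3]{GS}; the proof therefore reduces to citing that reference after observing that every finite extension $K$ of $F$ meets its hypotheses (completeness and quasi-finite residue field), which follows directly from the setup of this section and the fact that a finite extension of a quasi-finite field is quasi-finite.
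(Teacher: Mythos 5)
Your proposal is correct and takes essentially the same approach as the paper: both arguments reduce to local class field theory for complete discretely valued fields with quasi-finite residue field as developed in \cite[Section 6.3]{GS}, with the Hilbert 90 axiom being automatic and the invariant map built from the split exact sequence identifying $\Br(K)$ with $\Hom_{\mathrm{cts}}(\Gamma_{\kappa},\Q/\Z)$ after observing $\Br(\kappa)=0$ because $\kappa$ has dimension $\leq 1$. The paper cites the specific statements (Corollary 6.3.5 and Proposition 6.3.7) and notes the need to extend Proposition 6.3.7 from finite to quasi-finite residue fields via the chosen topological generator of $\Gamma_k$, whereas you unpack the inflation--restriction and $ef=[L:K]$ bookkeeping in slightly more detail before citing the same source, but the mathematical content is the same.
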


\begin{proof}
    There is a split exact sequence \cite[Corollary 6.3.5]{GS} \[0 \to \mathrm{Br}(\kappa) \to \mathrm{Br}(K) \to \mathrm{Hom}_{\mathrm{cts}}(\Gamma(\kappa_s/\kappa),\Z) \to 0.\] Since $\C((t))$ is of dimension $\le 1$, $\kappa$ is also of dimension 1, and so $\mathrm{Br}(\kappa)=0$. The commutativity of the diagram follows from \cite[Proposition 6.3.7]{GS}. This proposition is stated only for finite residue field, but the proof works in this setting, since we have fixed a topological generator of $\Gamma(k^s/k)$.
\end{proof}

Let $K$ be a finite Galois extension of $F$. The isomorphism \[\mathrm{inv}_F \colon \mathrm{Br}(F) \cong \Q/\Z\] obtained from the class formation $(\Gamma(\overline{F}/F),\overline{F}^\times)$ induces an isomorphism \[\mathrm{inv}_{K/F} \colon H^2(\Gamma(K/F),K^\times) \cong \frac{1}{n}\Z/\Z\] (See Theorem 2.1 in Milne's notes on class field theory). We define the \textit{(local) fundamental class} $\alpha(K/F)$ of $K/F$ to be the element $\alpha(K/F)$ of $H^2(\Gamma(K/F),K^\times)$ such that \[\mathrm{inv}_{K/F}(\alpha(K/F))=\frac{1}{[K:F]} \ \mathrm{mod} \ \Z.\]

One can rephrase this theorem as saying that we get a Tate-Nakayama triple $(X,A,\alpha)$ for $\Gamma(K/F)$ by taking 

\begin{itemize}
    \item $X$ to be $\Z$, with $\Gamma(K/F)$ acting trivially;
    \item $A$ to be $K^\times$, with the natural $\Gamma(K/F)$-action, and
    \item $\alpha$ to be the fundamental class in $H^2(\Gamma(K/F),K^\times)$.
\end{itemize}

\subsection{Algebraic 1-cocycles} We choose an extension \[1 \to K^\times \to W(K/F) \to \Gamma(K/F) \to 1\] that corresponds to the local fundamental class $\alpha(K/F)$. Notice that $W(K/F)$ is a local Weil group for the extension $K/F$. If $M$ is a $\Gamma(K/F)$-module, the extension defines an action of $W(K/F)$ on $M$, with $K^\times$ acting trivially.

We review the definition of an algebraic 1-cocycle, following \cite{kottwitz2014}. Let $G$ be a linear algebraic group over $F$. The Galois group $\Gamma=\Gamma(K/F)$ acts on $G(K)$. We consider the set $Z^1(W(F/K),G(K))$ of 1-cocycles of $W(F/K)$ in $G(K)$. An \textit{algebraic $1$-cocycle} of $W(F/K)$ in $G(K)$ is a pair $(\nu,x)$ consisting of 

\begin{itemize}
    \item a homomorphism $\nu \colon D \to G$ over $K$, and 

    \item an abstract $1$-cocycle $x$ of $W(K/F)$ in $G(K)$,
\end{itemize}

satisfying the following two compatibilities:

\begin{itemize}
    \item $x_d = \nu(d)$ for all $d \in D(E)$,

    \item $\text{Int}(x_w) \circ \sigma(\nu)=\nu$ whenever $w \in \mathcal{E}$ maps to $\sigma \in \Gamma(E/F)$.
\end{itemize}


We consider the set $Z^1(W(K/F),G(K))$ of abstract 1-cocycles of $W(K/F)$ in $G(K)$ and the set $Z^1_{\mathrm{alg}}(W(K/F),G(K))$ of algebraic $1$-cocycles of $W(K/F)$ in $G(K)$. We write $H^1_{\mathrm{alg}}(W(K/F),G(K))$ for the pointed set obtained as the quotient of $Z^1_{\mathrm{alg}}(W(K/F),G(K))$ by the action of $G(K)$.

Let $T$ be an $F$-torus that splits over $K$. Kottwitz has defined an isomorphism \[\kappa_T \colon H^1_{\mathrm{alg}}(W(K/F),T(K)) \cong X_*(T)_{\Gamma(K/F)},\] uniquely characterized by certain compatibility properties (see \cite{kottwitz2014}, Lemma 4.1).

Let $\Lambda_G$ denote Borovoi's algebraic fundamental group (see \cite{borovoi} for the definition and basic properties). 

\begin{prop}
    There is a unique map of natural transformations \[\kappa_G \colon H^1_{\mathrm{alg}}(W(K/F),G(K)) \to (\Lambda_G)_{\Gamma(K/F)}\] that agrees with $\kappa_T$ when $G=T$ is an $F$-torus split by $K$.
\end{prop}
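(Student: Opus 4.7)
The plan is to adapt Kottwitz's construction in \cite{kottwitz2014} to the $\C((t))$-function-field setting, reducing the problem to the torus case by a careful choice of maximal torus. Given an algebraic 1-cocycle $(\nu, x)$ of $W(K/F)$ in $G(K)$, the centralizer $L := Z_G(\nu)$ is a Levi subgroup of $G_K$ (since $\nu \colon D \to G$ is a homomorphism from a diagonalizable group), and the compatibility $\mathrm{Int}(x_w)\circ\sigma(\nu) = \nu$ forces $x$ to take values in the normalizer of $L$. After a $G(K)$-conjugation, $x$ can be arranged to lie in $L(K)$ itself. I would then choose a maximal $F$-torus $T$ of $G$, enlarging $K$ if necessary so that $T$ splits over $K$, and conjugate further inside $L$ so that the whole cocycle has image in $T(K)$. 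Define
\[
\kappa_G([\nu, x]) := \bigl[\kappa_T([\nu, x])\bigr] \in (\Lambda_G)_{\Gamma(K/F)},
\]
where the outer brackets denote the image under the canonical surjection $X_*(T)_{\Gamma(K/F)} \twoheadrightarrow (\Lambda_G)_{\Gamma(K/F)}$ coming from Borovoi's presentation of $\Lambda_G$ as a quotient of $X_*(T)$.

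The next step is to check well-definedness: independence of the choices of the conjugating elements and of the maximal torus $T$. Any two maximal $F$-tori of $G$ are $G(\overline{F})$-conjugate, and the induced isomorphism of cocharacter lattices is canonical only up to the Weyl-group action; this ambiguity vanishes in the quotient defining $\Lambda_G$ and therefore also after passing to $\Gamma(K/F)$-coinvariants. Naturality in $G$ follows since the formation of $Z_G(\nu)$, of a maximal torus, and of $\Lambda_G$ is compatible with homomorphisms $G \to G'$, reducing compatibility to the already-established naturality of $\kappa_T$ on tori. Uniqueness of $\kappa_G$ is then automatic: the torus case $\kappa_T$ determines $\kappa_G$ because, by the construction above, every class in $H^1_{\mathrm{alg}}(W(K/F), G(K))$ is represented by a cocycle coming from a maximal $F$-torus split by $K$.

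The main obstacle will be the reduction step, namely showing that every class in $H^1_{\mathrm{alg}}(W(K/F), G(K))$ is represented by an algebraic 1-cocycle with image inside a fixed maximal $F$-torus split by $K$. This is the analogue for algebraic 1-cocycles of the classical fact that semisimple elements lie in maximal tori, but now involving both the slope datum $\nu$ and an intertwining abstract 1-cocycle $x$. Concretely, one must equip the Levi $L = Z_G(\nu)$ with an $F$-form twisted by $x$, verify that this twisted form admits a maximal $F$-torus (using that connected reductive groups over fields possess maximal tori over the field), and transport such a torus back into $G$. Since $F$ has cohomological dimension $2$ and carries the class formation established in the preceding subsection, Kottwitz's argument for local fields should go through in this setting with only minor modifications.
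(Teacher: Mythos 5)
The paper's proof is simply a pointer to Kottwitz \cite{kottwitz2014}, Prop.\ 9.1, the point being that Kottwitz's construction of $\kappa_G$ from $\kappa_T$ is formal once the Tate--Nakayama package and local class formation are in place, and therefore transfers to the present setting without change. Kottwitz does \emph{not} build $\kappa_G$ by conjugating an arbitrary algebraic cocycle into a maximal torus of $G$; he passes through a $z$-extension $1\to Z\to G_1\to G\to 1$ with $G_{1,\mathrm{der}}$ simply connected and $Z$ an induced torus, so that $\Lambda_{G_1}\cong X_*(G_1/G_{1,\mathrm{der}})$, defines $\kappa_{G_1}$ by applying the torus case to the abelianization $G_1\to G_1/G_{1,\mathrm{der}}$, and then descends along the map $H^1_{\mathrm{alg}}(W(K/F),G_1(K))\to H^1_{\mathrm{alg}}(W(K/F),G(K))$ using vanishing for the induced kernel $Z$. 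Your proposal takes a genuinely different route, and that route has gaps.

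Two concrete problems. First, the proposition is for a \emph{fixed} finite Galois extension $K/F$, and ``enlarging $K$ if necessary so that $T$ splits over $K$'' alters the object being constructed: for a given $K$ there may be no maximal $F$-torus of $G$ at all that splits over $K$, so the claim that every class of $H^1_{\mathrm{alg}}(W(K/F),G(K))$ is represented by a cocycle valued in such a torus is false in general, and with it the uniqueness argument (which rests on that representability) collapses. Second, even granting a suitable $T\subset L$, the step ``conjugate further inside $L$ so that the whole cocycle has image in $T(K)$'' is precisely the nontrivial statement that every class comes from a torus; in Kottwitz's framework this is a \emph{consequence} of having $\kappa_G$ and the Newton map, not an input to the construction, so invoking it here is circular. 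You correctly flag this as the main obstacle but do not close it, and the well-definedness sketch you give (Weyl-group ambiguity vanishing in $\Lambda_G$) is also too weak: two tori carrying the same class may be stably but not rationally conjugate, and reconciling the two images in $(\Lambda_G)_{\Gamma(K/F)}$ requires the full functoriality of $\kappa_T$ under $F$-morphisms of tori, not just lattice-level Weyl invariance over $\overline{F}$.
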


\begin{proof}
See \cite{kottwitz2014}, Prop. 9.1.
\end{proof}

\subsection{Local $B(F,G)$} We consider local fields $L \supset K \supset F$ with both $L$ and $K$ finite Galois over $F$. One has a natural map \[H^1_{\mathrm{alg}}(W(K/F),G(K)) \to H^1_{\mathrm{alg}}(W(L/F),G(L))\] (see expression (8.32) in \cite{kottwitz2014}, page 35) and it is an isomorphism when $G$ is a torus split by $K$ \cite[Lemma 8.1]{kottwitz2014}. For any linear algebraic $F$-group $G$ we define a pointed set $B(F,G)$ to be the direct limit \[B(F,G)=\varinjlim H^1_{\mathrm{alg}}(W(K/F),G(K)),\] the direct limit being taken over the set of finite Galois extensions $K$ of $F$ in $\overline{F}$. For $L \supset K$, the transition map is the inflation map \[H^1_{alg}(W(K/F),G(K)) \to H^1_{alg}(W(L/F), G(L))\] defined above.


\subsection{Basic elements (local)}

We review the definition a subset $B(F,G)_{\mathrm{bsc}} \subset B(F,G)$  of basic elements, following Kottwitz. For fields $L \supset K \supset F$ with both $L$ and $K$ finite Galois over $F$ and contained in $\overline{F}$, let $\Z_K$ and $\Z_L$ be copies of $\Z$ attached to $K$ and $L$. Let $p \colon \Z_K \to \Z_L$ be defined by multiplication by $[L:K]$. We define $\mathbb{D}_F$ to be the protorus over $F$ whose character group is \[X^*(\mathbb{D}_F)=\varinjlim \Z_K = \Q\] with the transition maps $p \colon \Z_K \to \Z_L$. Thus $\mathbb{D}_F = \varprojlim  \mathbb{G}_{m,K}$, the limit being taken over the directed set of finite Galois extensions of $F$ in $\overline{F}$. The map $(\nu,x) \mapsto \nu$ induces a well-defined map \[H^1_{\mathrm{alg}}(W(K/F),G(K)) \to (\mathrm{Hom}_K(\mathbb{G}_m,G)/\mathrm{Int}(G(K))^{\Gamma(K/F)}\] which is called a Newton map. These maps fit together to give a Newton map \[B(F,G) \to [\mathrm{Hom}_{\overline{F}}(\mathbb{D}_F,G)/G(\overline{F})]^{\Gamma}.\] The image of $b \in B(F,G)$ under the Newton map is called the Newton point of $b$. 

The natural maps $H^1(\Gamma(K/F),G(K)) \hookrightarrow H^1_{\mathrm{alg}}(W(K/F),G(K))$ fit together to give an injective map \[H^1(F,G) \hookrightarrow B(F,G),\] whose image is the kernel of the Newton map.




We denote by $Z(G)$ the center of $G$. The inclusion $Z(G) \hookrightarrow G$ induces an injection \[\mathrm{Hom}_{\overline{F}}(\mathbb{D}_F,Z(G)) \hookrightarrow [\mathrm{Hom}_{\overline{F}}(\mathbb{D}_F,G)/G(\overline{F})]^\Gamma.\] We say that $b \in B(F,G)$ is \textit{basic} if its Newton point lies in the image of the map above. We write $B(F,G)_{\mathrm{bsc}}$ for the set of basic elements in $B(F,G)$. The set $B(F,G)_{bsc}$ contains the image of $H^1(F,G)$ under the inclusion (1.1).

\subsection{Main theorem (local)}

\begin{lem}
Let $G$ be a connected reductive group over $F$ and let $G^{\mathrm{ss}}$ denote the derived subgroup of $G$. Then $G$ contains an elliptic maximal $F$-torus, i.e., a maximal $F$-torus $T$ such that $T \cap G^{\mathrm{ss}}$ is an anisotropic $F$-torus. 
\end{lem}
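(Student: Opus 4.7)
The plan is to adapt the classical argument of Kneser and Kottwitz (for $p$-adic fields) to the present setting. Although $F$ is not a local field in the usual sense, its residue field is quasi-finite, so $F$ admits a rich supply of cyclic Galois extensions. Combined with a reduction to the quasi-split semisimple case, this will allow me to construct an anisotropic maximal torus by twisting a split maximal torus with a Coxeter cocycle.

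First I would reduce to the semisimple case. A maximal $F$-torus $T$ of $G$ contains the connected center $Z(G)^0$ and is elliptic precisely when $T \cap G^{\mathrm{ss}}$ is an anisotropic $F$-torus; conversely, any maximal $F$-torus $T_1$ of $G^{\mathrm{ss}}$ together with $Z(G)^0$ generates a maximal $F$-torus of $G$ whose intersection with $G^{\mathrm{ss}}$ equals $T_1$. So it suffices to find an anisotropic maximal $F$-torus in $G^{\mathrm{ss}}$. I would then pass to the quasi-split inner form $H$ of $G^{\mathrm{ss}}$: stable conjugacy classes of maximal $F$-tori in a semisimple group are parameterized by an $H^1(F,W)$ with $W$ the absolute Weyl group carrying its outer $\Gamma_F$-action, and this classification (together with the $F$-isomorphism type, and hence anisotropy, of the resulting torus) depends only on the inner class. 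Hence I may assume the group is quasi-split.

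For the quasi-split group $H$, fix an $F$-rational Borel pair $(B_0,T_0)$ and set $W=N_H(T_0)/T_0$. I would choose a Coxeter element $c\in W$ lying in an outer-automorphism-stable conjugacy class—possible because Coxeter classes are preserved by pinning-preserving outer automorphisms. The element $c$ has order equal to the Coxeter number and acts on $X^*(T_0)\otimes\Q$ without nonzero fixed vectors. The task is to produce a $1$-cocycle $\rho\colon\Gamma_F\to W$ (for the outer $\Gamma_F$-action on $W$) whose composition with $W\hookrightarrow\mathrm{Aut}(X^*(T_0))$ has image generated, up to conjugacy, by $c$. Twisting $T_0$ by $\rho$ then produces a maximal $F$-torus $T_\rho$ whose Galois action on $X^*(T_\rho)$ has no nonzero fixed vectors, i.e.\ $T_\rho$ is $F$-anisotropic.

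The cocycle construction is the step where the specific structure of $F$ enters and is the main obstacle I anticipate. Since the residue field of $F$ is quasi-finite (being a finite extension of $\C((t))$), $F$ admits unramified cyclic extensions of every degree; together with the finite Galois extension that splits the outer action on $W$, one assembles a finite Galois extension $L/F$ and a homomorphism $\Gamma(L/F)\to \langle c\rangle\rtimes\mathrm{Out}(H)$ that realizes the Coxeter action, yielding the required $\rho$. The delicate point is reconciling the choice of $c$ with the outer $\Gamma_F$-action in the non-split quasi-split case; this is handled by the outer-stability of the Coxeter conjugacy class and by the flexibility of $\Gamma_F$, which surjects onto any finite cyclic quotient. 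Transferring the anisotropic torus back through the quasi-split inner form to $G^{\mathrm{ss}}$, and thence to $G$ via the center, is then routine.
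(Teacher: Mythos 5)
Your strategy---reduce to the semisimple case, build an anisotropic maximal torus by twisting via a Weyl-group cocycle realizing a Coxeter element, and note that the requisite cyclic Galois extensions of every degree exist because the residue field of $F$ is quasi-finite---is in line with the argument the paper invokes (Platonov--Rapinchuk, Theorem 6.21), and you have isolated the same single point that requires adaptation, namely the supply of cyclic extensions.

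There is, however, a genuine gap in your reduction to the quasi-split inner form. You claim that since stable conjugacy classes of maximal $F$-tori ``are parameterized by $H^1(F,W)$'' and this ``depends only on the inner class,'' you may assume the group is quasi-split. This does not follow. What is true is that $H^1(F,W)$ (for the twisted action) classifies \emph{potential} Galois-twist types of maximal tori, and this set depends only on the inner class. But the subset of classes that are \emph{actually realized} by maximal $F$-tori of $G^{\mathrm{ss}}$ is the image of $\ker\bigl(H^1(F,N_{G}(T_0))\to H^1(F,G)\bigr)$ in $H^1(F,W)$, and this image genuinely depends on the specific inner form, not only its inner class. Concretely: for a quasi-split group over a suitable field the map $H^1(F,N_G(T_0))\to H^1(F,W)$ is surjective with a lift realizing every class by a torus, but for a non-quasi-split inner form some $W$-classes---including possibly the anisotropic ones you want---may not be realized. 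To pass from the quasi-split form $H$ back to $G^{\mathrm{ss}}$ you need a transfer theorem for elliptic maximal tori: writing $G^{\mathrm{ss}}$ as the inner twist of $H$ by a class $\xi\in H^1(F,H_{\mathrm{ad}})$, one must show $\xi$ lifts to $H^1(F,T^*_{\mathrm{ad}})$ for the anisotropic $T^*\subset H$ you have constructed. Over a $p$-adic field this surjectivity is a theorem of Kottwitz relying on Tate--Nakayama duality and Kneser's vanishing; over the present $F$ it likewise requires the local duality machinery and a vanishing input (the $C_2$-field theorems of Bayer--Parimala and Gille that the paper invokes elsewhere). This step cannot be waved away as ``depending only on the inner class''; it is precisely where the arithmetic of $F$ beyond the existence of cyclic extensions must enter.
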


\begin{proof}
Since every maximal torus of $G^{\mathrm{ss}}$ is contained in a maximal torus of $G$, we may assume that $G$ is semisimple. We use the proof of \cite[Theorem 6.21]{platonov}. The lemma in this reference states the result for non-archimedean local fields, but the proof works in our setting. We have to adapt the proof because we need to know that there exists a cyclic extension of $K$ of degree $n$ for every integer $n \ge 2$. But this follows from the fact that the residue field of $F$ is quasi-finite.
\end{proof}

The main result of this section is the following theorem. Let $\Gamma=\Gamma(\overline{F}/F)$.

\begin{thm} \label{local}
    The map $\kappa_G$ restricts to a bijection \[\kappa_G \colon B(F,G)_{\mathrm{bsc}} \to (\Lambda_G)_{\Gamma}.\]
\end{thm}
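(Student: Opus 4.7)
The plan is to reduce to the case of tori via the elliptic maximal $F$-torus $T \subset G$ provided by the preceding lemma. For $G = T$ the statement is already in hand: by the characterizing compatibility of $\kappa_T$ together with Kottwitz's Lemma 4.1, the map $\kappa_T \colon B(F,T) \to (X_*(T))_\Gamma$ is a bijection, and \emph{every} element of $B(F,T)$ is basic (since $Z(T) = T$). So it suffices to compare $B(F,G)_{\mathrm{bsc}}$ with $B(F,T)$ under the map induced by the inclusion $T \hookrightarrow G$.

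The core of the argument is the commutative square
\[
\begin{array}{ccc}
B(F,T) & \longrightarrow & B(F,G)_{\mathrm{bsc}} \\
{\scriptstyle \kappa_T}\downarrow & & \downarrow{\scriptstyle \kappa_G} \\
(X_*(T))_{\Gamma} & \longrightarrow & (\Lambda_G)_{\Gamma},
\end{array}
\]
whose commutativity follows from the functoriality built into the definition of $\kappa_G$; recall that $\Lambda_G = X_*(T)/\langle R^\vee\rangle$, so the bottom map is the canonical surjection on $\Gamma$-coinvariants. The left vertical arrow is a bijection by the torus case, and the bottom arrow is surjective. Consequently the bijectivity of the right arrow will follow once I establish two statements: (i) the top arrow has image exactly $B(F,G)_{\mathrm{bsc}}$, and (ii) two elements of $B(F,T)$ with the same image in $(\Lambda_G)_\Gamma$ have the same image in $B(F,G)$.

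For (i), the inclusion of the image in $B(F,G)_{\mathrm{bsc}}$ is tautological once we restrict to those lifts whose Newton point, a priori in $\mathrm{Hom}(\mathbb{D}_F, T)$, actually lands in $Z(G)$. The surjectivity is the main lifting step: given a basic $b \in B(F,G)$ with Newton point $\nu \colon \mathbb{D}_F \to Z(G) \subset T$, I would construct an algebraic $1$-cocycle with values in $T(\overline F)$ whose image in $B(F,G)$ is $b$, by combining $\nu$ with an abstract $1$-cocycle built from the local fundamental class $\alpha(K/F)$ on a sufficiently large Galois extension $K/F$, following the recipe of \cite{kottwitz2014}. For (ii), one shows that the kernel of $X_*(T)_\Gamma \to (\Lambda_G)_\Gamma$, generated by $\Gamma$-coinvariant classes of coroots, is realized under $\kappa_T$ by elements of $B(F,T)$ mapping trivially into $B(F,G)$; this reduces to an assertion about the fiber of $B(F,T) \to B(F,G)$, which is controlled by cohomology of the simply connected cover of $G^{\mathrm{ss}}$.

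The main obstacle is the interplay between the ellipticity of $T$ and the local class formation. Ellipticity means $T \cap G^{\mathrm{ss}}$ is anisotropic, which via Tate-Nakayama duality for the class formation $(\Gamma(\overline F/F), \overline F^\times)$ established above yields the cohomological vanishing that controls both the lifting in (i) and the kernel computation in (ii). Once these points are verified, the proof runs in parallel with Kottwitz's argument in the local section of \cite{kottwitz2014}; the only new inputs are that the class formation over $F$ is provided by the proposition above and that enough cyclic extensions exist because the residue field of $F$ is quasi-finite.
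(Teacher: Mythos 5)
Your strategy---reduce to the torus case via the elliptic maximal $F$-torus supplied by the preceding lemma, then chase a commutative square---is exactly what the paper invokes by citing Kottwitz's Prop.~13.1, so in outline you and the paper agree. Two points need sharpening before the argument actually closes.

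First, in step (i): the fact that \emph{every} element of $B(F,T)$ maps into $B(F,G)_{\mathrm{bsc}}$ is where ellipticity of $T$ does its work, and this should be stated as such rather than as a restriction of the domain. Since $T \cap G^{\mathrm{ss}}$ is anisotropic, one has $X_*(T)^\Gamma \otimes \Q = X_*(Z(G)^\circ)^\Gamma \otimes \Q$, so every Newton point arising from $B(F,T)$ automatically factors through $Z(G)$. Your phrase ``once we restrict to those lifts whose Newton point \dots actually lands in $Z(G)$'' suggests an ad hoc shrinking of $B(F,T)$, which would invalidate the diagram chase (the top arrow would no longer be defined on all of $B(F,T)$, and the left vertical would no longer be the bijection $\kappa_T$). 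No restriction is needed.

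Second, step (ii) as argued has a gap: showing that $\kappa_T^{-1}\bigl(\ker(X_*(T)_\Gamma \to (\Lambda_G)_\Gamma)\bigr)$ lands on the basepoint of $B(F,G)$ does \emph{not} by itself imply that two classes $b_1', b_2' \in B(F,T)$ with the same image in $(\Lambda_G)_\Gamma$ have the same image in $B(F,G)$, because $B(F,T) \to B(F,G)$ is only a map of pointed sets, not a group homomorphism; $\iota_*(b_1'(b_2')^{-1})=1$ does not give $\iota_*(b_1')=\iota_*(b_2')$. This is exactly the place where Kottwitz's proof uses a twisting argument: one twists $G$ by (the image of) one of the classes, notes that $T$ remains elliptic in the inner form, and reduces to the triviality of the corresponding fiber there. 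You gesture at this with ``the fiber of $B(F,T)\to B(F,G)$,'' so the idea is present, but the twisting needs to be invoked explicitly for the reduction to be valid. With those two corrections the proposal matches the proof the paper is pointing to.
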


\begin{proof} The proof is the same as that of \cite[Prop. 13.1]{kottwitz2014}, using the lemma above.
\end{proof}

\begin{coro}
    The map $\kappa_G$ restricts to a bijection $H^1(F,G) \to (\Lambda_G)_{\Gamma,\mathrm{tors}}$.
\end{coro}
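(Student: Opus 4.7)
The plan is to combine Theorem \ref{local} with the description of $H^1(F,G)$ as the kernel of the Newton map inside $B(F,G)$. Since this kernel is automatically contained in $B(F,G)_{\mathrm{bsc}}$, Theorem \ref{local} already provides an injection $\kappa_G \colon H^1(F,G) \hookrightarrow (\Lambda_G)_{\Gamma}$, and what remains is to pin down the image.

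The key step is to verify that under the bijection $\kappa_G \colon B(F,G)_{\mathrm{bsc}} \xrightarrow{\sim} (\Lambda_G)_{\Gamma}$, the Newton map on the left corresponds to the natural map $(\Lambda_G)_{\Gamma} \to (\Lambda_G \otimes \Q)_{\Gamma}$ on the right (the target being canonically identified with $(\Lambda_G \otimes \Q)^{\Gamma}$ since $\Gamma$ acts through a finite quotient on a $\Q$-vector space). For a torus $T$ split by $K$ this compatibility is built into Kottwitz's characterization of $\kappa_T$ in \cite[Lemma 4.1]{kottwitz2014}. I would extend it to the general reductive case by pulling back along the surjection $B(F,T) \to B(F,G)_{\mathrm{bsc}}$ coming from an elliptic maximal $F$-torus $T \subset G$ supplied by the lemma preceding Theorem \ref{local}, using that every basic class in $B(F,G)$ lifts from $B(F,T)$.

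Granting this compatibility, the image description is immediate: for any finitely generated $\Gamma$-module $M$, the kernel of the natural map $M_{\Gamma} \to M_{\Gamma} \otimes \Q$ is exactly the torsion subgroup $M_{\Gamma,\mathrm{tors}}$. Applied to $M = \Lambda_G$, together with the identification of $H^1(F,G)$ as the kernel of the Newton map, this forces $\kappa_G$ to send $H^1(F,G)$ bijectively onto $(\Lambda_G)_{\Gamma,\mathrm{tors}}$.

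The hardest part will be the compatibility check in the non-torus case, but it is entirely formal and parallels the development in \cite[Section 9]{kottwitz2014}, where the naturality of $\kappa_G$ in $G$ and the functoriality of the Newton construction are played against one another. No new input about the base field $F$ is required beyond what was already used in Theorem \ref{local}, so the corollary follows essentially by transport of structure.
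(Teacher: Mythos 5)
Your proposal is correct and matches the argument the paper implicitly relies on (and which Kottwitz carries out in the setting being adapted): restrict the bijection $\kappa_G \colon B(F,G)_{\mathrm{bsc}} \to (\Lambda_G)_\Gamma$ of Theorem \ref{local} to the kernel of the Newton map, identify that kernel with $H^1(F,G)$, and observe that under $\kappa_G$ the Newton map becomes the rationalization $(\Lambda_G)_\Gamma \to (\Lambda_G)_\Gamma \otimes \Q$, whose kernel is $(\Lambda_G)_{\Gamma,\mathrm{tors}}$. The compatibility step you flag as "the hardest part" is indeed the only content beyond Theorem \ref{local}, and the reduction to an elliptic maximal torus is exactly how the paper (following Kottwitz) handles it.
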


We note that it is possible to give another proof of the corollary without using $B(G)$, for example by using hypercohomology as in \cite{borovoi}. One obtains a map \[H^1(K,G) \cong (\Lambda_G)_{\Gamma,\mathrm{tors}}\] as the composition of an abelianization map with an isomorphism coming from local Tate-Nakayama duality (see \cite[Prop. 4.1(i)]{borovoi}). However, since the theory of $B(G)$ over local and global fields has applications to automorphic forms, we hope that developing a theory of $B(G)$ will yield interesting applications.

\section{Global Theory}

\subsection{Notation} 

In this section, we let $F$ be a function field over $k=\C((t))$ and let $X$ be a geometrically connected projective smooth curve over $k$ with function field $F$. We fix a topological generator $\sigma$ of the absolute Galois group of $\C((t))$. The set of closed points of $X$ will be denoted by $X^{(1)}$ (thus $X^{(1)}$ omits only the generic point of $X$). We are interested in the family of completions $F_v$ of $F$ relative to discrete valuations of rank 1 that are trivial on $k=\C((t))$, that is to say the valuations corresponding to the closed points of the $k$-curve $X$. We let $R_v$ denote the ring of integers in $K_v$. For such a valuation $v$, the residue field $k(v)$ is a quasi-finite field of degree $\mathrm{deg}(v)$ over $k=\C((t))$ with $\sigma^{\mathrm{deg}(v)}$ as the chosen generator of $\Gamma(\overline{k(v)}/k(v))$. It is of the form $\C((s))$ with $s=t^{1/n}$ for suitable $n$. 

We let $\mathrm{Jac}_X$ denote the Jacobian variety of $X$. From now on, we make the following assumption on the curve $X$.

\begin{itemize}
    \item For every finite extension $k'$ of $k$, the cohomology group $H^1(\Gamma_{k'},\mathrm{Jac}_X(\overline{k}))=0$.
\end{itemize}

In particular, the Brauer group $\text{Br}(X)$ is assumed to be trivial \cite[Proposition A.13]{ADT}. We make this assumption in order to obtain a global class formation, as reviewed below.

We define the \textit{group of ideles} $I_F$ of $F$ to be the subgroup of $\prod_{v \in X^{(1)}}F_v^\times$ comprising those elements $a=(a_v)$ such that $a_v \in R_v^\times$ for all but finitely many $v$. The quotient of $I_F$ by $F^\times$ (embedded diagonally) is the \textit{idele class group} $C_F$ of $F$. We set $C=\varinjlim C_K$ (limit over all finite extensions $K$ of $F$, $K \subset \overline{F}$).

Let $K/F$ be a finite Galois extension with Galois group $\Gamma(K/F)$. Then $\Gamma(K/F)$ operates on the idele group $I_K$ of $K$, on the multiplicative group $K^\times$ of $K$, and on the idele class group $C_K$ of $K$. Corresponding to the thee $\Gamma(K/F)$-modules $I_K, C_K$, and $K^\times$ there are three $\Gamma(K/F)$-modules of much simpler structure.

We let $\Z[V_K]$ denote the divisor class group of the function field $K$, i.e., the free abelian group having one basis element $x_P$ for each closed point $P$ of $X$, and we let $\Gamma(K/F)$ operate on $\Z[V_K]$ according to its effect on the primes, namely, $\lambda(x_P)=x_{\lambda(P)}$. The module that corresponds to $C_K$ is $\Z$ with the trivial action. The natural homomorphism $\Z[V_K] \to \Z$ defined by $x_P \mapsto 1$ for all $P$ is surjective, and we define $\Z[V_K]_0$ to be the kernel of this homomorphism. In other words, $\Z[V_K]_0$ is the kernel of the homomorphism $\Z[V_K] \to \Z$ defined by $\sum_{v \in V_K}n_v v \mapsto \sum_{v \in V_K}n_v$.

We define $\mathbb{D}_{K/F}$ to be the $F$-group of multiplicative type whose character group is $\Z[V_K]_0$.

\subsection{Review of global class field theory over function fields}
Write $a_v$ for the image of an element $a$ of $\mathrm{Br}(F)$ in $\mathrm{Br}(F_v)$, and define \[\mathrm{inv}_F \colon \mathrm{Br}(F) \to \Q/\Z\] to be $a \mapsto \sum_v \mathrm{inv}_v(a_v)$, where $\text{inv}_v$ is $\mathrm{inv}_{F_v}$. 

\begin{lem}
There is an exact sequence \[0 \to H^1(\Gamma_k,\mathrm{Jac}_X(\overline{k})) \to \mathrm{Br}(F) \to \bigoplus_{v \in X^{(1)}}\mathrm{Br}(F_v) \to \Q/\Z \to 0.\] 
\end{lem}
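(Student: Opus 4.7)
The plan is to adapt the argument for function fields of curves over finite fields (cf.\ \cite[Ch.~II]{ADT}), exploiting that $k = \C((t))$ shares with a finite field the properties $\Gamma_k \cong \widehat{\Z}$, $\mathrm{cd}(k) \le 1$, and $\mathrm{Br}(k) = 0$. The sequence will be assembled from (i) a Hochschild--Serre computation of $\mathrm{Br}(X)$, (ii) the residue/Gysin sequence for the smooth curve $X$, and (iii) a reciprocity argument for the tail at $\Q/\Z$.

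For (i), I would run the Hochschild--Serre spectral sequence
\[E_2^{p,q} = H^p\bigl(\Gamma_k,\, H^q(X_{\overline{k}}, \mathbb{G}_m)\bigr) \Rightarrow H^{p+q}(X, \mathbb{G}_m).\]
Hilbert~90 kills $H^1(\Gamma_k, \overline{k}^\times)$; the bound $\mathrm{cd}(k) \le 1$ kills $H^p(\Gamma_k, \overline{k}^\times)$ for $p \ge 2$; and Tsen's theorem (the function field of $X_{\overline{k}}$ is $C_1$) gives $\mathrm{Br}(X_{\overline{k}}) = 0$. The low-degree terms then collapse to $\mathrm{Br}(X) \xrightarrow{\sim} H^1(\Gamma_k, \mathrm{Pic}(X_{\overline{k}}))$. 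Feeding in the degree sequence $0 \to \mathrm{Jac}_X(\overline{k}) \to \mathrm{Pic}(X_{\overline{k}}) \to \Z \to 0$ (with trivial $\Gamma_k$-action on $\Z$) and the vanishing $H^1(\Gamma_k, \Z) = \mathrm{Hom}_{\mathrm{cts}}(\widehat{\Z}, \Z) = 0$, one obtains $\mathrm{Br}(X) \cong H^1(\Gamma_k, \mathrm{Jac}_X(\overline{k}))$.

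For (ii), the residue sequence for the regular curve $X$ gives
\[0 \to \mathrm{Br}(X) \to \mathrm{Br}(F) \xrightarrow{\oplus\,\partial_v} \bigoplus_{v \in X^{(1)}} H^1\bigl(k(v), \Q/\Z\bigr),\]
and for each closed point $v$, the residue field $k(v)$ is quasi-finite, so $H^1(k(v), \Q/\Z) \cong \Q/\Z$ is identified canonically with $\mathrm{Br}(F_v)$ by the local class field theory of Section~2.3, in such a way that $\partial_v$ becomes $\mathrm{inv}_v \circ \mathrm{res}_v$. Splicing (i) and (ii) already produces the injection
\[0 \to H^1(\Gamma_k, \mathrm{Jac}_X(\overline{k})) \to \mathrm{Br}(F) \to \bigoplus_v \mathrm{Br}(F_v),\]
and surjectivity of $\sum_v \mathrm{inv}_v$ onto $\Q/\Z$ is immediate from that of any single $\mathrm{inv}_v$.

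The main obstacle is exactness at $\bigoplus_v \mathrm{Br}(F_v)$: the reciprocity $\sum_v \mathrm{inv}_v \circ \mathrm{res}_v = 0$ on $\mathrm{Br}(F)$ together with the matching coimage statement. I would extract these by extending the residue sequence one step further to
\[\bigoplus_v H^1\bigl(k(v), \Q/\Z\bigr) \to H^3(X, \mathbb{G}_m) \to H^3(F, \mathbb{G}_m),\]
computing $H^3(X, \mathbb{G}_m)$ by the same Hochschild--Serre spectral sequence (where the relevant higher Galois contributions vanish because $\mathrm{cd}(k) \le 1$), and then applying a trace/pushforward to $H^3(\Gamma_k, \overline{k}^\times) = 0$ to identify the connecting homomorphism with $\sum_v \mathrm{inv}_v$. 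This is a direct transcription of Milne's argument in \cite[Ch.~II]{ADT}, which goes through because $\Gamma_k \cong \widehat{\Z}$, $\mathrm{cd}(k) \le 1$, and $\mathrm{Br}(k) = 0$ exactly as for a finite field; the most delicate bookkeeping is the compatibility of the trace with the sum-of-local-invariants, and that is where I expect the main technical work of the proof to lie.
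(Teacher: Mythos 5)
Your strategy — Hochschild--Serre for $\mathrm{Br}(X)$, the residue sequence for the regular curve, and an extension one step further to handle reciprocity — is indeed the skeleton of the argument in the cited source. The paper itself offers no argument at all: its proof is a one-line citation to \cite[Theorem A.7]{ADT}, so you have attempted to reconstruct what that reference proves. Two points need attention.

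The serious gap is in your step (i), where you claim that the degree sequence $0 \to \mathrm{Jac}_X(\overline{k}) \to \mathrm{Pic}(X_{\overline{k}}) \to \Z \to 0$ together with $H^1(\Gamma_k,\Z)=0$ yields $\mathrm{Br}(X)\cong H^1(\Gamma_k,\mathrm{Jac}_X(\overline{k}))$. The long exact sequence actually reads
\[
\mathrm{Pic}(X_{\overline{k}})^{\Gamma_k} \xrightarrow{\ \deg\ } \Z \to H^1(\Gamma_k,\mathrm{Jac}_X(\overline{k})) \to H^1(\Gamma_k,\mathrm{Pic}(X_{\overline{k}})) \to 0,
\]
so what Hochschild--Serre gives you, $\mathrm{Br}(X)\cong H^1(\Gamma_k,\mathrm{Pic}(X_{\overline{k}}))$, is \emph{a priori} only a quotient of $H^1(\Gamma_k,\mathrm{Jac}_X(\overline{k}))$ by $\Z/\delta\Z$, where $\delta$ is the positive generator of the image of the degree map on $\mathrm{Pic}(X_{\overline{k}})^{\Gamma_k}$ (which equals $\mathrm{Pic}(X)$ here since $\Br(k)=0$ and Hilbert~90 hold). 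Your proof silently assumes $\delta=1$, i.e.\ that $X$ carries a $k$-rational divisor class of degree one. That is F.K.~Schmidt's theorem when $k$ is finite (via Lang's theorem), but $k=\C((t))$ is not finite, and $H^1(\Gamma_k,\mathrm{Jac}_X(\overline{k}))$ need not vanish, so this step requires a genuine separate argument. Without resolving it, the map you would actually obtain out of $H^1(\Gamma_k,\mathrm{Jac}_X(\overline{k}))$ into $\Br(F)$ has kernel $\Z/\delta\Z$, and the claimed injectivity at the left of the sequence fails whenever $\delta>1$. You should either prove that the relevant degree map is surjective in this setting or locate precisely where the reference handles the index term.

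The second point you identify yourself: exactness at $\bigoplus_v\Br(F_v)$ is only sketched. You are right that this is where the hard work is; extending the Gysin sequence to $H^3(X,\G_m)$ and controlling it via $\mathrm{scd}(\widehat{\Z})=2$ and $\Br(k')=0$ is the right idea, but the identification of the connecting map with $\sum_v \mathrm{inv}_v$ and the resulting exactness would need to be carried out, whereas the paper simply delegates the entire statement to \cite[Theorem A.7]{ADT}.
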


\begin{proof}
    This is \cite[Theorem A.7, page 131]{ADT}.
\end{proof}

\begin{lem}
If $H^1(\Gamma_{k'},\mathrm{Jac}_X(\overline{k}))=0$ for all finite extensions $k'$ of $k$, then it is possible to define on $(\Gamma_F,C)$ a natural structure of a class formation.
\end{lem}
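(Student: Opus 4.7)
The plan is to verify the two standard axioms of a class formation for the pair $(\Gamma_F, C)$, namely that for every finite Galois extension $K/F$ in $\overline{F}$ one has
\[
H^1(\Gamma(K/F), C_K) = 0,
\]
together with a canonical invariant isomorphism $\mathrm{inv}_{K/F}\colon H^2(\Gamma(K/F), C_K) \xrightarrow{\sim} \tfrac{1}{[K:F]}\Z/\Z$ that behaves correctly under inflation, restriction and corestriction. I would model the argument on the classical function-field case (Tate, Artin-Tate, or the treatment in \cite[Chapter I, \S A]{ADT}), replacing appeals to finiteness of the constant field with the local class field theory for completions of $F$ already set up in Section~2 and with the global Brauer sequence recorded in the lemma just proved.

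First I would set up cohomology of ideles. For each finite Galois $K/F$, using the decomposition $I_K = \varinjlim_S I_{K,S}$ with $I_{K,S} = \prod_{w \in S} K_w^\times \times \prod_{w \notin S} R_w^\times$ over $\Gamma(K/F)$-stable finite sets $S$, Shapiro's lemma gives
\[
H^r(\Gamma(K/F), I_K) \;=\; \bigoplus_{v \in X^{(1)}} H^r(\Gamma(K_w/F_v), K_w^\times),
\]
where for each $v$ one picks a place $w$ of $K$ above $v$. The case $r=1$ vanishes by classical Hilbert 90 applied at each local extension, and the case $r=2$ is, by local class field theory from Section~2, the direct sum of the local Brauer groups of the completions. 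Combining this with classical Hilbert 90 $H^1(\Gamma(K/F), K^\times)=0$ applied to the exact sequence $1 \to K^\times \to I_K \to C_K \to 1$ yields the vanishing of $H^1(\Gamma(K/F), C_K)$.

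Next I would identify $H^2(\Gamma(K/F), C_K)$ with $\tfrac{1}{[K:F]}\Z/\Z$. The long exact cohomology sequence of $1 \to K^\times \to I_K \to C_K \to 1$, combined with the previous lemma (whose vanishing of $H^1(\Gamma_{k'}, \mathrm{Jac}_X(\overline{k}))$ is exactly the assumption made on $X$), gives an exact sequence that fits into the four-term Brauer sequence; the sum of local invariants $\sum_v \mathrm{inv}_v$ then descends to a well-defined invariant map on $H^2(\Gamma(K/F), C_K)$. I would verify its injectivity by a diagram chase using the lemma and its surjectivity onto $\tfrac{1}{[K:F]}\Z/\Z$ by exhibiting a local component with a prescribed invariant (here one uses that every residue field $k(v)$ is quasi-finite, so each local $\mathrm{inv}_v$ surjects onto $\tfrac{1}{[K_w:F_v]}\Z/\Z$, as recorded in Section~2). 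Passing to the limit over $K$ gives $\mathrm{inv}_F\colon H^2(\Gamma_F, C) \xrightarrow{\sim} \Q/\Z$.

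Finally I would check the class formation compatibilities: that $\mathrm{inv}_{L/F} \circ \mathrm{Inf} = \mathrm{inv}_{K/F}$ for $L \supset K \supset F$, and that $\mathrm{inv}_{K/K'} \circ \mathrm{Res} = [K':F] \cdot \mathrm{inv}_{K/F}$ for intermediate $K'$; both reduce to the same compatibilities for the local invariants $\mathrm{inv}_v$, which are built into the local class formation. The main obstacle is the invariant-map step: verifying that the sum $\sum_v \mathrm{inv}_v$ is well-defined (the sum is finite on the image of $H^2(\Gamma(K/F), I_K)$), that it kills the image of $H^2(\Gamma(K/F), K^\times)$, and that the resulting map on $H^2(\Gamma(K/F), C_K)$ is both injective and has the correct image $\tfrac{1}{[K:F]}\Z/\Z$. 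This is precisely the place where the hypothesis $H^1(\Gamma_{k'}, \mathrm{Jac}_X(\overline{k})) = 0$ is used in an essential way, via the four-term sequence of the previous lemma.
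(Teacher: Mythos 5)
Your outline follows essentially the approach of \cite[Appendix A]{ADT}, which the paper cites for this lemma, so at the level of strategy it is the right one. However, there is a genuine gap in your argument for the vanishing of $H^1(\Gamma(K/F),C_K)$. You assert that combining $H^1(\Gamma(K/F),I_K)=0$ (Shapiro plus local Hilbert 90) with $H^1(\Gamma(K/F),K^\times)=0$ (global Hilbert 90) yields $H^1(\Gamma(K/F),C_K)=0$. This is not so: the long exact sequence of $1\to K^\times\to I_K\to C_K\to 1$ only gives
\[
0 \longrightarrow H^1(\Gamma(K/F),C_K) \longrightarrow H^2(\Gamma(K/F),K^\times) \longrightarrow H^2(\Gamma(K/F),I_K),
\]
so $H^1(\Gamma(K/F),C_K)$ is the kernel of the map $\mathrm{Br}(K/F)\to\bigoplus_v\mathrm{Br}(K_w/F_v)$. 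Its vanishing is equivalent to the Hasse principle for Brauer groups, i.e.\ the injectivity of $\mathrm{Br}(F)\to\bigoplus_v\mathrm{Br}(F_v)$, which is precisely where the hypothesis $H^1(\Gamma_k,\mathrm{Jac}_X(\overline{k}))=0$ enters via the four-term sequence of the preceding lemma. You do invoke that hypothesis later, but only to discuss the well-definedness and bijectivity of the invariant map on $H^2$; it is equally essential for the $H^1$ axiom, and the claim that Hilbert 90 alone suffices there would fail. (Note also that the class formation axioms require $H^1(\Gamma(K/K'),C_K)=0$ for every intermediate field $K'$, whose constant field may be a proper finite extension $k'$ of $k$; this is why the hypothesis is stated for all finite extensions $k'$, not just for $k$ itself.)

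A second, smaller point: in identifying $H^r(\Gamma(K/F),I_K)$ with the restricted direct sum of local cohomology groups, you should note that this uses the vanishing of $H^r(\Gamma(K_w/F_v),R_w^\times)$ for $r\ge 1$ at unramified places, which in turn relies on the residue fields $k(v)$ being quasi-finite. This is implicit in your appeal to the local theory of Section~2, but deserves to be made explicit, since it is another point at which the structure of the residue fields over $\C((t))$ is used.
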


\begin{proof}
    This is \cite[Corollary A.8, page 132]{ADT}.
\end{proof}

\begin{coro}
 Suppose that $H^1(\Gamma_{k'},\mathrm{Jac}_X(\overline{k}))=0$ for all finite extensions $k'$ of $k$. For every finite Galois extension $K$ of $F$, there is an isomorphism \[\mathrm{inv}_{K/F} \colon H^2(\Gamma(K/F),C_K) \to \frac{1}{n}\Z/\Z.\] 
\end{coro}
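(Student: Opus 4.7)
The plan is to deduce the corollary from the class formation structure on $(\Gamma_F, C)$ supplied by the preceding lemma, by identifying $H^2(\Gamma(K/F), C_K)$ with the $n$-torsion subgroup of $H^2(\Gamma_F, C) \cong \Q/\Z$. This is the standard mechanism by which the statement for a finite Galois layer is extracted from the axioms of a class formation.

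First I would apply the five-term Hochschild--Serre exact sequence to $1 \to \Gamma_K \to \Gamma_F \to \Gamma(K/F) \to 1$ with coefficients in $C$; since $C^{\Gamma_K} = C_K$, this reads
\[ H^1(\Gamma_K, C)^{\Gamma(K/F)} \to H^2(\Gamma(K/F), C_K) \xrightarrow{\mathrm{inf}} H^2(\Gamma_F, C) \xrightarrow{\mathrm{res}} H^2(\Gamma_K, C). \]
The vanishing $H^1(\Gamma_L, C) = 0$ for every finite extension $L/F$ inside $\overline{F}$ is one of the defining axioms of a class formation, so $H^1(\Gamma_K, C) = 0$ and inflation is injective. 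Hence $H^2(\Gamma(K/F), C_K)$ is identified with the kernel of $\mathrm{res}$.

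The class formation also furnishes an invariant isomorphism $H^2(\Gamma_L, C) \cong \Q/\Z$ for every finite extension $L/F$, and the compatibility axiom asserts that under these isomorphisms the restriction $H^2(\Gamma_F, C) \to H^2(\Gamma_K, C)$ is multiplication by $n = [K:F]$. Its kernel is therefore the subgroup $\frac{1}{n}\Z/\Z$, giving the desired $\mathrm{inv}_{K/F}$. I expect no real obstacle beyond unpacking this compatibility; in our concrete setting it can be verified directly from the sum-of-local-invariants formula $\mathrm{inv}_F(a) = \sum_v \mathrm{inv}_v(a_v)$ together with the local identity $\mathrm{inv}_w \circ \mathrm{res}_{F_v \to K_w} = [K_w : F_v] \cdot \mathrm{inv}_v$, summed over all places $w$ of $K$ above a given place $v$ of $F$.
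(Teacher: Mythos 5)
Your proof is correct and takes the standard route one would expect: the paper gives no explicit proof because this is an immediate consequence of the class formation axioms (referenced from Milne's ADT), and your argument simply unfolds that deduction. Identifying $H^2(\Gamma(K/F),C_K)$ with the kernel of restriction via inflation-restriction (using $H^1(\Gamma_K,C)=0$) and then with $\frac{1}{n}\Z/\Z$ via the compatibility of invariant maps is exactly how $\mathrm{inv}_{K/F}$ is constructed from $\mathrm{inv}_F$ in any treatment of class formations.
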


The cohomology class $\alpha \in H^2(\Gamma(K/F),C_K)$ such that $\mathrm{inv}_{K/F}(\alpha)=1/n$ is called the \textit{global fundamental class}.

\subsection{Cohomology of tori}

In this section, we extend the results of \cite{Tate1966} to our context. We compare the two exact sequences of $\Gamma(K/F)$-modules \[1 \to K^\times \to I_K \to C_K \to 1\] and \[1 \to \mathbb{Z}[V_K]_0 \to \mathbb{Z}[V_K] \to \mathbb{Z} \to 1.\] 


\begin{thm}
There are elements 
\begin{align*}
    \alpha_1 &\in H^2(\Gamma(K/F), \mathrm{Hom}(\mathbb{Z},C_K)), \\
    \alpha_2 &\in H^2(\Gamma(K/F),\mathrm{Hom}(\Z[V_E],I_K)), \\
    \alpha_3 &\in H^2(\Gamma(K/F),\mathrm{Hom}(\Z[V_K]_0,K^\times))
\end{align*} such that cup product by these cohomology classes induces isomorphisms 
\begin{align*}
    H^r(\Gamma(K/F),\Z) &\to H^{r+2}(\Gamma(K/F),C_K), \\
    H^r(\Gamma(K/F),\Z[V_K]) &\to H^{r+2}(\Gamma(K/F),I_K), \\
    H^r(\Gamma(K/F),\Z[V_K]_0) &\to H^{r+2}(\Gamma(K/F),K^\times)
\end{align*} for all $r$, $-\infty<r<\infty$.
\end{thm}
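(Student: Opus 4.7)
The plan is to adapt Tate's original argument~\cite{Tate1966} to our setting, leveraging the global class formation $(\Gamma_F,C)$ established in the corollary above and the local class formations of Section~2. The three statements are tightly linked: the first isomorphism is a direct consequence of abstract Tate-Nakayama applied to the global class formation; the second is assembled place by place via Shapiro's lemma from the local fundamental classes; and the third is extracted from these two by a five-lemma argument comparing the long exact Tate cohomology sequences of the two given short exact sequences.

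For $\alpha_1$, I would take the image of the global fundamental class $\alpha(K/F)\in H^2(\Gamma(K/F),C_K)$ under the canonical identification $C_K\cong\mathrm{Hom}(\Z,C_K)$. That cup product with $\alpha_1$ induces isomorphisms in every Tate degree is precisely the content of abstract Tate-Nakayama applied to the class formation $(\Gamma_F,C)$. For $\alpha_2$, fix for each place $v\in V_F$ a place $w$ of $K$ above $v$ with decomposition group $\Gamma_w\subseteq\Gamma(K/F)$. Shapiro's lemma decomposes both $\Z[V_K]$ and $I_K$ as a (restricted) sum of modules induced from the $\Gamma_w$, and the local fundamental classes $\alpha(K_w/F_v)\in H^2(\Gamma_w,K_w^\times)$ produced in Section~2 package via Shapiro into the required class $\alpha_2$. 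Cup product with $\alpha_2$ decomposes, after Shapiro, into a direct sum of the local cup products, each of which is an isomorphism by local Tate-Nakayama; since Tate cohomology of the finite group $\Gamma(K/F)$ commutes with the relevant direct limit over finite sets of places, the assembled map is an isomorphism in every degree.

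For $\alpha_3$, I would use a compatibility between $\alpha_1$ and $\alpha_2$ to produce a third class making the two ladders of long exact Tate cohomology sequences attached to
\[
1\to K^\times\to I_K\to C_K\to 1\qquad\text{and}\qquad 1\to\Z[V_K]_0\to\Z[V_K]\to\Z\to 1
\]
commute with cup product. Concretely: under the coboundary in the first sequence, combined with the inclusion $\Z[V_K]_0\hookrightarrow\Z[V_K]$, the class $\alpha_2$ maps to a cohomology class in $H^2(\Gamma(K/F),\mathrm{Hom}(\Z[V_K]_0,K^\times))$ which one verifies to be the desired $\alpha_3$. The two already established isomorphisms, together with the five-lemma applied degree by degree, then force cup product with $\alpha_3$ to be an isomorphism for all $r$.

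The main technical obstacle is this compatibility verification, namely that the assembled local fundamental class really does map to the global fundamental class under the coboundary of $1\to K^\times\to I_K\to C_K\to 1$. In the number-field case this is the heart of Tate's paper; in our setting it is essentially formal, because both the global and local class formations were built from the same invariant maps $\mathrm{inv}_v$ and the sum formula $\mathrm{inv}_F=\sum_v\mathrm{inv}_v$ recorded in the Brauer-group lemma at the start of this section. Granting this, the classical Tate-Nakayama ladder argument transplants to our situation without essential change.
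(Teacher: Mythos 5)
Your proposal correctly reconstructs the argument of Tate's 1966 paper, which is exactly what the paper's own proof cites for $\alpha_2$ and $\alpha_3$, and you identify $\alpha_1$ with the global fundamental class just as the paper does. One minor slip of language: the compatibility between $\alpha_1$ and $\alpha_2$ is that the \emph{projection} $I_K\to C_K$ (not the coboundary of the sequence $1\to K^\times\to I_K\to C_K\to 1$) carries the Shapiro-assembled local class to the global fundamental class, but this is a verbal imprecision rather than a gap, since you correctly tie the compatibility to the sum formula $\mathrm{inv}_F=\sum_v\mathrm{inv}_v$ and then extract $\alpha_3$ by the standard five-lemma ladder.
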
 

\begin{proof}
The class $\alpha_1$ is the global fundamental class of $K/F$. For $\alpha_2$ and $\alpha_3$, see \cite{Tate1966}.
\end{proof}


\begin{prop}
    The triple $(\Z[V_K]_0,K^\times,\alpha_3)$ is a Tate-Nakayama triple.
\end{prop}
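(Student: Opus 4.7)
To verify that $(\Z[V_K]_0, K^\times, \alpha_3)$ is a Tate-Nakayama triple, I would check the two conditions from the definition in Section 2.1: weak Tate-Nakayama (cup product with $\mathrm{Res}_{G/G'}(\alpha_3)$ induces isomorphisms $H^r(G', \Z[V_K]_0) \to H^{r+2}(G', K^\times)$ for every subgroup $G' \subset \Gamma(K/F)$ and every $r$) and rigidity ($H^1(G', \Hom(\Z[V_K]_0, K^\times)) = 0$ for every such $G'$).

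The weak Tate-Nakayama condition should follow from the theorem just stated, applied to each intermediate extension $K/F'$ with $F' = K^{G'}$. Since the curve hypothesis passes to $F'$ by base change, that theorem yields the cup-product isomorphism for $G'$ in terms of the corresponding class $\alpha_3^{K/F'}$. A standard compatibility of fundamental classes under restriction (inherited from the global fundamental class, as in \cite{Tate1966}) identifies $\mathrm{Res}_{G/G'}(\alpha_3)$ with $\alpha_3^{K/F'}$, finishing this part.

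For rigidity, I would apply $\Hom(-, K^\times)$ to the exact sequence $0 \to \Z[V_K]_0 \to \Z[V_K] \to \Z \to 0$. Since $\Z$ is $\Z$-projective, the resulting sequence of $\Gamma(K/F)$-modules
$$0 \to K^\times \to \Hom(\Z[V_K], K^\times) \to \Hom(\Z[V_K]_0, K^\times) \to 0$$
is exact. Taking the long exact sequence in $G'$-cohomology and invoking Hilbert 90 ($H^1(G', K^\times)=0$), the vanishing of $H^1(G', \Hom(\Z[V_K]_0, K^\times))$ reduces to two assertions: (a) $H^1(G', \Hom(\Z[V_K], K^\times)) = 0$, and (b) injectivity of the diagonal-induced map $H^2(G', K^\times) \to H^2(G', \Hom(\Z[V_K], K^\times))$.

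Point (a) is handled by decomposing $\Z[V_K]$ as a $G'$-module into a direct sum of modules $\mathrm{Ind}_{G'_w}^{G'} \Z$ indexed by places $v'$ of $F' = K^{G'}$ (with $w \mid v'$ a chosen place of $K$ and $G'_w$ its decomposition subgroup); Shapiro's lemma identifies $H^1$ with $\prod_{v'} H^1(G'_w, K^\times)$, which vanishes by Hilbert 90 applied to each extension $K/K^{G'_w}$. Point (b) is the main obstacle. I would compare with the class-formation exact sequence $0 \to H^2(G', K^\times) \to H^2(G', I_K)$ (injective because $H^1(G', C_K)=0$) using the natural $G'$-equivariant map $\Hom(\Z[V_K], K^\times) \to \prod_w K_w^\times$ induced by the local embeddings $K \hookrightarrow K_w$. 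After Shapiro, this produces the commutative square
$$\begin{array}{ccc} H^2(G', K^\times) & \longrightarrow & \prod_{v'} H^2(G'_w, K^\times) \\ \downarrow & & \downarrow \\ \bigoplus_{v'} H^2(G'_w, K_w^\times) & \hookrightarrow & \prod_{v'} H^2(G'_w, K_w^\times), \end{array}$$
whose left vertical is injective by class formation and whose bottom arrow is the tautological inclusion. A diagram chase then forces the top arrow to be injective, completing the proof.
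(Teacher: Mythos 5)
Your proposal is correct, and since the paper itself supplies no argument beyond a citation to Kottwitz's section 6.3, your reconstruction fills in genuine content. The structure (weak Tate--Nakayama by restriction-compatibility of the cup-product class, rigidity by the long exact sequence obtained from $\Hom(-,K^\times)$ applied to $0 \to \Z[V_K]_0 \to \Z[V_K] \to \Z \to 0$, Shapiro plus Hilbert 90 for the permutation piece, and the class-formation injectivity $H^2(G',K^\times)\hookrightarrow H^2(G',I_K)\cong\bigoplus_{v'}H^2(G'_w,K_w^\times)$ for the diagonal piece) is exactly the standard route Kottwitz uses, so you are not taking a different path so much as spelling one out.

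Two small remarks. First, the appeal to ``the curve hypothesis passes to $F'$ by base change'' in the weak TN step is unnecessary: once $(\Gamma_F,C)$ is a class formation, the class-formation axioms for every open subgroup $\Gamma_{F'}\subset\Gamma_F$ are automatic, so Tate's theorem already delivers the cup-product isomorphisms for all subgroups $G'\subset\Gamma(K/F)$ with the restricted class; no geometric input about the cover of $X$ is needed. Second, in point (b) the key fact you implicitly use is the commutativity of the composite $K^\times \hookrightarrow I_K$ with the Shapiro isomorphism $H^2(G',I_K)\cong\bigoplus_{v'}H^2(G'_w,K_w^\times)$, namely that the $v'$-component equals $\mathrm{Res}_{G'_w}$ followed by $H^2(G'_w,K^\times)\to H^2(G'_w,K_w^\times)$; this is the same local--global compatibility underlying the formula $\inv_F = \sum_v \inv_v$, and once stated the diagram chase is immediate. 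With those clarifications the argument is complete.
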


\begin{proof}
    See \cite{kottwitz2014}, section 6.3.
\end{proof}







\subsection{$B(F,G)$ (global)}

We choose an extension \[1 \to \mathrm{Hom}(\Z[V_K]_0,K^\times) \to \mathcal{E}(K/F) \to \Gamma(K/F) \to 1\] whose associated cohomology class is $\alpha_3$. Using this extension, we get (see \cite{kottwitz2014}), for each linear algebraic group over $F$, a pointed set $H^1_{\mathrm{alg}}(\mathcal{E}(K/F),G(K))$.

Given a larger finite Galois extension $L \supset K$, there is a natural $\Gamma(L/F)$-map $p \colon \Z[V_K]_0 \to \Z[V_L]_0$ defined as follows. The value of $p$ on the basis element $v \in V_K$ is defined to be \[p(v)=\sum_{w \lvert v}[L_w :K_v]w.\] Note that since $[L:K]=\sum_{w \lvert v}[L_w:K_v]$, $p$ is well-defined.


Using $p$, one forms an inflation map \[H^1_{\mathrm{alg}}(\mathcal{E}(K/F),G(K)) \to H^1_{\mathrm{alg}}(\mathcal{E}(L/F),G(L))\] (see \cite{kottwitz2014}, expression (8.32) on page 42). Using these inflation maps as transition morphisms, we form a pointed set $B(F,G)$ as the colimit of $H^1_{\mathrm{alg}}(\mathcal{E}(K/F),G(K))$, with $K$ varying over the directed set of finite Galois extensions of $F$ in some fixed separable closure $\overline{F}$ of $F$: \[B(F,G)=\varinjlim H^1_{\mathrm{alg}}(\mathcal{E}(K/F),G(K)).\]

For a place $u$ of $F$ there is a localization map \[B(F,G) \to B(F_u,G).\] The definition of this map is identical to that in \cite{kottwitz2014}, expression (10.7) on page 49.

\begin{prop}
Let $G$ be a connected reductive $F$-group, the total localization map \[B(F,G) \to \prod_{u \in V_F}B(F_u,G)\] takes values in $\bigoplus_{u \in V_F}B(F_u,G)$. That is, the components of any element in the image of the total localization map are trivial for all but finitely many $u \in V_F$.   
\end{prop}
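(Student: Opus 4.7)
The plan is a spreading-out argument analogous to the one used over global fields, exploiting that each residue field $k(u)$ is of the form $\mathbb{C}((s))$ and so has cohomological dimension one.

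First, I fix a representative of $b \in B(F,G)$ by an algebraic $1$-cocycle $(\nu,x) \in Z^1_{\mathrm{alg}}(\mathcal{E}(K/F),G(K))$ for some finite Galois extension $K/F$. Because $\Gamma(K/F)$ is finite and $\nu$ factors through a finitely generated quotient of the character lattice $\mathbb{Z}[V_K]_0$, the cocycle is described by only finitely many pieces of data defined over $K$.

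Next, I spread out. Let $Y \to X$ be the normalization of $X$ in $K$. There is a smooth affine group scheme $\mathcal{G}$ over some dense open $U_0 \subset X$ with generic fiber $G$, and by further shrinking I pick a dense open $U \subset U_0$ such that $Y \times_X U \to U$ is finite étale and every piece of the data defining $(\nu,x)$ extends over $Y_U$. The complement $X \setminus U$ is finite and accounts for all possibly nontrivial components of the total localization map, so it remains to prove that $b_u = 0$ in $B(F_u,G)$ for every closed point $u \in U$.

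For such a $u$, the local representative of $b_u$ is integrally defined: the abstract cocycle takes values in $\mathcal{G}(R_u^{\mathrm{unr}})$ and $\nu_u$ extends over $R_u$. Reducing modulo the maximal ideal produces an algebraic cocycle for $\mathcal{G}_{k(u)}$ over the quasi-finite residue field $k(u) \cong \mathbb{C}((s))$. Since $k(u)$ is perfect of cohomological dimension one, Steinberg's theorem yields $H^1(k(u),\mathcal{G}_{k(u)}) = 0$. A smoothness/Hensel argument then lifts this trivialization from the residue field back to $R_u$, producing an integral coboundary whose generic fiber trivializes $b_u$ in $B(F_u,G)$.

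The main obstacle I anticipate is the last step for the full set $B(F_u,G)$, as opposed to just $H^1(F_u,G)$: one must confirm that the Newton component of an integrally defined algebraic cocycle vanishes after reduction and that the Hensel-type lifting trivializes the full algebraic datum $(\nu_u, x_u)$, not merely the cohomology class of $x_u$. Granting this, the proof proceeds exactly as in the analogous argument of Kottwitz over a global function field \cite[\S10]{kottwitz2014}.
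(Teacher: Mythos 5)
Your approach matches the paper's intent: the paper's proof is the single line ``the same proof of Corollary 14.3 in Kottwitz works, after replacing Lang's Theorem by Steinberg's vanishing theorem,'' and the spreading-out-plus-Steinberg mechanism you describe is exactly that argument transposed from finite residue fields to the quasi-finite residue fields $k(u)\cong\C((s))$. So the skeleton is right.

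The obstacle you flag at the end is real but you are looking for it in the wrong place, and as phrased it would not resolve. The Newton component $\nu_u$ does not ``vanish after reduction modulo the maximal ideal''; there is no integral model of $\mathbb{D}_{F_u}$ over $R_u$ against which such a reduction makes sense, so that sentence has no content. The correct reason $\nu_u$ is trivial for almost all $u$ is a finiteness-of-support statement that has nothing to do with integrality of the cocycle: the global Newton point $\nu$ is a $K$-rational homomorphism $\mathbb{D}_{K/F}\to G$, which after conjugation into a maximal torus $T$ is a finite collection of elements of $\Z[V_K]_0=X^*(\mathbb{D}_{K/F})$, and each such element is a divisor of degree zero supported on finitely many $w\in V_K$. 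The localization at $u$ composes with the projection $\Z[V_K]_0\hookrightarrow\Z[V_K]\to\Z[V_u]$, which kills $\nu$ whenever no $w$ above $u$ lies in that finite support. Thus $\nu_u=0$ for all but finitely many $u$, and for those $u$ the cocycle condition $x_d=\nu(d)$ forces $x_u$ to factor through $\Gamma(K_w/F_u)$, so $b_u$ lands in the image of $H^1(F_u,G)\hookrightarrow B(F_u,G)$. Only at that point does your spreading-out/Hensel/Steinberg argument (an ordinary Galois-cohomology argument, not an ``algebraic-cocycle'' one) enter, and it then cleanly shows that $H^1(R_u,\mathcal G)\cong H^1(k(u),\mathcal G_{k(u)})=0$ for almost all $u\in U$. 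With the Newton step corrected in this way the proof closes; without it, the Hensel step alone does not control $\nu_u$ and the claim that the full element of $B(F_u,G)$ is trivial would remain unproved.
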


\begin{proof}
    The same proof of Corollary 14.3 in \cite{kottwitz2014} works, after replacing Lang's Theorem by Steinberg's vanishing theorem for connected linear algebraic groups over fields of dimension $\le 1$ \cite[Theorem 1.9]{Steinberg1965}.
\end{proof}

We choose a finite Galois extension $K$ of $F$ in $\overline{F}$ such that $\Gamma(\overline{F}/K)$ acts trivially on $\Lambda_G$. Put \[A(F,G) = (\Lambda_G \otimes \Z[V_K]_0)_{\Gamma(K/F)}.\] There is a natural map (see \cite[Section 11.5]{kottwitz2014}) \[\kappa_G \colon B(F,G) \to A(F,G).\] We have the composed map \[\Z[V_K]_0 \hookrightarrow \Z[V_K] \to \Z[V_u],\] where $V_u$ is the set of places of $K$ lying over $u$. Tensoring with $\Lambda_G$ and forming $\Gamma(K/F)$-coinvariants, we obtain the composed map \[(\Lambda_G \otimes \Z[V_K]_0)_{\Gamma(K/F)} \to (\Lambda_G \otimes \Z[V_K])_{\Gamma(K/F)} \to (\Lambda_G \otimes \Z[V_u])_{\Gamma(K/F)} \cong (\Lambda_G)_{\Gamma(K_v/F_u)}.\] Then we get a map \[(\Lambda_G \otimes \Z[V_K]_0)_{\Gamma(K/F)} \to (\Lambda_G)_{\Gamma(K_v/F_u)}.\] By taking the colimit, we obtain a localization map \[A(F,G) \to A(F_u,G).\]

\subsection{Basic elements (global)}

There is a \textit{(global) Newton map} \[B(F,G) \to [\mathrm{Hom}_{\overline{F}}(\mathbb{D}_F,G)/G(\overline{F})]^\Gamma,\] where $\Gamma=\mathrm{Gal}(\overline{F}/F)$ and $\mathbb{D}_F=\varprojlim \mathbb{D}_{K/F}$, the limit being taken over the directed set of finite Galois extensions $K$ of $F$ in $\overline{F}$. The kernel of the Newton map is the image of $H^1(F,G)$ under the natural inclusion.

Inside the target of the Newton map is the subset $\text{Hom}_F(\mathbb{D}_F,Z(G))$. The preimage of $\mathrm{Hom}_F(\mathbb{D}_F,Z(G))$ is by definition the set $B(F,G)_{\mathrm{bsc}}$ of \textit{basic elements} of $B(F,G)$. The image of $H^1(F,G)$ in $B(F,G)$ is contained in $B(F,G)_{\mathrm{bsc}}$. 

\subsection{Main theorem (global)}

We make some preliminary remarks.

\begin{lem}
    Let $H$ be a semisimple simply connected algebraic group over $F$. Then $H^1(F,H)=0$.
\end{lem}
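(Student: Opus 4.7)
The plan is to reduce the problem to Serre's Conjecture II for the field $F$. The field $F$ has cohomological dimension $\le 2$ (as noted in the introduction) and is in fact a $C_2$-field (being the function field of a curve over the $C_1$-field $\C((t))$), so Serre's Conjecture II is the expected mechanism.

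First I would reduce to the case where $H$ is absolutely almost simple. Any semisimple simply connected $F$-group decomposes as $H \cong \prod_i R_{F_i/F}(H_i)$ with each $H_i$ absolutely almost simple simply connected over a finite separable extension $F_i/F$. By Shapiro's lemma, $H^1(F, R_{F_i/F}(H_i)) \cong H^1(F_i, H_i)$. Each $F_i$ is the function field of a smooth projective geometrically integral curve over a finite extension of $k = \C((t))$, and such an extension has the form $\C((t^{1/n}))$; thus $F_i$ is again a field of the same type as $F$, and the curve over it inherits the triviality of $H^1(\Gamma, \mathrm{Jac})$ hypothesis by base change. So we may assume $H$ is absolutely almost simple.

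Second, I would invoke Serre's Conjecture II type by type. For classical groups (types $A_n$ inner/outer, $B_n$, $C_n$, $D_n$ non-trialitarian), the result follows from the translation of $H^1$ into isomorphism classes of central simple algebras with involution or hermitian forms, combined with $u$-invariant and period-index bounds for $F$; those bounds are available for $F$ from the theory of Brauer groups of surfaces over $\C[[t]]$, viewing $X$ as the generic fiber of a regular projective model over $\mathrm{Spec}\, \C[[t]]$. For the exceptional types one invokes the corresponding results of Chernousov, Gille, Garibaldi, and Bayer-Fluckiger, each of which applies once one knows the relevant cohomological invariants vanish on $F$.

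The main obstacle is uniformly justifying Serre's Conjecture II for all exceptional types over this precise class of fields. A cleaner alternative I would fall back on is field patching à la Harbater-Hartmann-Krashen applied to a regular projective $\C[[t]]$-model of $X$: this reduces $H^1(F,H)=0$ to the corresponding vanishing over the patch fields and the completions $F_v$. For each $F_v$, vanishing follows from Bruhat-Tits theory, since $F_v$ is complete discretely valued with residue field of cohomological dimension $\le 1$, to which Kneser's theorem in the form extended by Bruhat-Tits and Gille applies.
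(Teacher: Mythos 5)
Your main line of attack is the same as the paper's: reduce to the absolutely almost simple case via Weil restriction and Shapiro's lemma, then invoke Serre's Conjecture II type by type, using the $C_2$ property of $F$. The paper supplies the precise references you are worried about: inner forms of type $A_n$ follow from Merkurjev--Suslin, the remaining classical groups from Bayer-Fluckiger--Parimala, and the exceptional groups from a theorem of Gille combined with period-index properties of $C_2$ fields (citing Preeti--Yadav). So the "main obstacle" you flag for exceptional types is already resolved in the literature; you do not need to retreat to a fallback. Two caveats on your write-up. First, the claim that each $F_i$ "inherits the $H^1(\Gamma, \mathrm{Jac})$ hypothesis by base change" is both misleading and unnecessary: the curve with function field $F_i$ is a cover of $X$ (not a base change), so its Jacobian is not obtained from $\mathrm{Jac}_X$ by base change; but that hypothesis plays no role in this lemma, which is a pure Serre II statement depending only on $F$ being $C_2$. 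Second, your field-patching fallback is a genuinely different route, but as stated it is incomplete: the Harbater--Hartmann--Krashen local-global principle is proven for $F$-rational connected groups, and simply connected semisimple groups need not be rational, so you would need the more refined patching results for such groups (or the injectivity statements of Colliot-Thélène--Parimala--Suresh) rather than a bare appeal to patching.
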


\begin{proof}
One can reduce to the case where $H$ is absolutely almost simple. For inner forms of type $A_n$, this is due to Merkurjev and Suslin and for the other classical groups it is a theorem of Bayer and Parimala \cite{bayerparimala}. For exceptional groups, it follows from a theorem of Gille \cite{gille} as well as index-period properties of $C_2$ fields (see \cite{PY}, Theorem on page 194). 
\end{proof}

\begin{lem}
    Let $S$ be a finite nonempty set of places of $F$. There exists a maximal $F$-torus $T$ in $G$ such that $T$ is elliptic over $F_u$ for every $u \in S$.
\end{lem}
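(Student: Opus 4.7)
My plan is to adapt the classical proof (as in Platonov--Rapinchuk Theorem 7.1, or Kottwitz's corresponding argument for global function fields) to the present setting. The three ingredients I would combine are: the local existence of elliptic maximal tori (the previous lemma), $F$-rationality of the variety of maximal tori, and weak approximation at $S$ for that variety.

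First, for each $u \in S$, I would apply the previous lemma to the completion $F_u$. Since $F_u$ is itself the completion of a function field over $\C((t))$ at a discrete rank-one valuation trivial on $\C((t))$, the local hypothesis is met, and the lemma provides an elliptic maximal $F_u$-torus $T_u \subset G_{F_u}$. These correspond to $F_u$-points $x_u$ of the $F$-scheme $\mathcal{T}$ parametrizing maximal tori of $G$.

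Next, I would fix an auxiliary maximal $F$-torus $T_0 \subset G$ and identify $\mathcal{T} \simeq G / N_G(T_0)$. A standard consequence of the big Bruhat cell (Chevalley) is that $\mathcal{T}$ is $F$-rational; in particular, there is a dominant $F$-morphism from a Zariski open subset of some $\mathbb{A}^N$ onto an open subset of $\mathcal{T}$. Weak approximation for $\mathcal{T}(F)$ at $S$ would then follow by lifting each $x_u$ to an $F_u$-point of $\mathbb{A}^N$ (using smoothness of the parametrization near the lift), applying weak approximation for affine space over $F$ --- which reduces to the Chinese Remainder Theorem for the finite family of independent discrete valuations on the function field $F$ --- and pushing the resulting $F$-rational approximation back to $\mathcal{T}$ to produce an $F$-point $x \in \mathcal{T}(F)$ that is $F_u$-adically close to $x_u$ for every $u \in S$.

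Finally, I would verify that ellipticity is an open condition in the $F_u$-adic topology on $\mathcal{T}(F_u)$. The idea is that if $T_u$ splits over a finite Galois extension $K_u/F_u$, then every maximal torus in a sufficiently small $F_u$-adic neighborhood of $T_u$ also splits over $K_u$ and carries the same $\Gamma(K_u/F_u)$-action on its character lattice, since this action factors through a finite quotient and is therefore locally constant in the family. Consequently, anisotropy of the intersection with $G^{\mathrm{ss}}$ is preserved in a neighborhood, and the maximal $F$-torus $T$ corresponding to the approximating point $x$ will be elliptic over $F_u$ for every $u \in S$.

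The main obstacle I anticipate is precisely the openness of ellipticity combined with a clean statement of weak approximation in our setting: the argument is standard over global fields, but must be justified here using only those properties of $F$ (quasi-finite residue fields at each place, CRT for the function field, $F$-rationality of $\mathcal{T}$) that actually hold. Once this is in place, the conclusion follows by taking the torus $T \subset G$ corresponding to the weakly approximating $F$-point of $\mathcal{T}$.
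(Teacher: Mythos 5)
Your proposal follows essentially the same route as the paper, which in turn adapts Platonov--Rapinchuk (Section 7.1, Corollary 3): pick elliptic tori locally, use $F$-rationality and weak approximation for the variety $\mathcal{T}$ of maximal tori, then conclude by an openness argument. The one place where your write-up diverges in emphasis is the openness step. The paper does not attempt to show directly that ``ellipticity is an open condition''; instead it observes that the orbit map $g \mapsto g\cdot M^v$ from $G(F_v)$ to $\mathcal{T}(F_v)$ is open in the $v$-adic topology, so that a sufficiently good approximant in $\mathcal{T}(F)$ is actually $G(F_v)$-conjugate to the chosen elliptic torus $M^v$, and hence elliptic over $F_v$ for free. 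Your sketch of openness via local constancy of the splitting field and of the Galois action on the character lattice is morally correct, but the cleanest way to make it rigorous is precisely this conjugacy observation; arguing the local constancy directly would amount to reproving orbit openness. You may wish to replace the paragraph on openness with the orbit-map argument, which shortens the proof and avoids having to track splitting fields. Otherwise the proposal is sound and matches the paper's strategy.
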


\begin{proof}
We choose an elliptic torus $M^v$ for each $v \in S$.  We follow the proof of Section 7.1 Corollary 3 of \cite{platonov}; see also the proof of Theorem 2 of \cite{Prarap}. Let $\mathcal{T}$ denote the variety of maximal tori of $G$. For any valuation $v$ of $K$, the map $\phi \colon G(K_v) \to \mathcal{T}(K_v)$ given by $g \mapsto g \cdot M^v$, where $g \cdot M_v = g^{-1} M^v g$, is open in the $v$-adic topology. Thus, for any $v \in S$, the orbit set $G(F_v) \cdot M^v$ is open in $\mathcal{T}(K_v)$. Since $\mathcal{T}$ is rational and smooth, it satisfies weak approximation, and so there exists a maximal torus \[T \in \mathcal{T}(F) \cap \prod_{v \in S} (G(F_v) \cdot M^v).\] Now $G(F_v) \cdot M^v$ consists of $F_v$-tori that are conjugate to $M^v$ and so are in particular elliptic. This completes the proof. 
\end{proof}

The main results of this section are the following theorem and corollaries.

\begin{thm} \label{global}
    The map $\kappa_G \colon B(F,G) \to A(F,G)$ restricts to a bijection \[\kappa_G \colon B(F,G)_{\mathrm{bsc}} \to A(F,G) = (\Lambda_G \otimes \Z[V_K]_0)_{\Gamma(K/F)}.\]
\end{thm}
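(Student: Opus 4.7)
The plan is to follow the strategy of \cite[Cor.\ 15.2]{kottwitz2014} and its preparatory propositions, replacing the inputs specific to global function fields over finite fields by their analogs established above: the global class formation on $(\Gamma_F, C)$ granted by our standing assumption on $X$, Steinberg's vanishing theorem in place of Lang's theorem, the elliptic torus lemma, and the vanishing of $H^1(F,H)$ for semisimple simply connected $H$. The functoriality of the entire construction in $G$ means that the torus case, once settled, propagates to the general case via a suitably chosen maximal torus.

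First, I would treat the torus case. Let $T$ be an $F$-torus split by $K$. Because the extension $\mathcal{E}(K/F)$ represents the class $\alpha_3$ of the Tate-Nakayama triple $(\Z[V_K]_0, K^\times, \alpha_3)$, an unwinding of the definition of algebraic 1-cocycles identifies $H^1_{\mathrm{alg}}(\mathcal{E}(K/F), T(K))$ with $H^1(\Gamma(K/F), \mathrm{Hom}(\Z[V_K]_0, T(K)))$. Cup product with $\alpha_3$, which is a Tate-Nakayama isomorphism on every subgroup of $\Gamma(K/F)$, then yields a natural identification with $(X_*(T) \otimes \Z[V_K]_0)_{\Gamma(K/F)} = A(F,T)$. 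For a torus we have $Z(T)=T$, so every element of $B(F,T)$ is automatically basic, and the theorem holds in this case.

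Next, I would reduce the general reductive case to the torus case. Given $b \in B(F,G)_{\mathrm{bsc}}$, by the direct-sum proposition proved above there is a finite set $S$ of places outside which the localization of $b$ is trivial. Using the elliptic torus lemma, choose a maximal $F$-torus $T \subset G$ that is elliptic at every $v \in S$. Since $b$ is basic, its Newton point takes values in $Z(G)$ and therefore, up to $G(\overline{F})$-conjugacy, factors through any such maximal $F$-torus; this allows one to lift $b$ to an element $b_T \in B(F,T) = B(F,T)_{\mathrm{bsc}}$ mapping to $b$ under the functorial map $B(F,T) \to B(F,G)$. Combined with the torus case and the naturality of $\kappa$ through $A(F,T) \to A(F,G)$, this yields surjectivity of $\kappa_G$ on basic elements. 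For injectivity, two basic elements with the same image in $A(F,G)$ can be arranged to lift to a common torus $T$, and the remaining ambiguity is controlled by an $H^1$ with values in a simply connected group, which vanishes by the preceding lemma.

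The main obstacle will be the bookkeeping required to check that each step in Kottwitz's argument --- which is intertwined with the exact sequence $1 \to K^\times \to I_K \to C_K \to 1$, its local analogs at every $v$, and the comparison of algebraic cocycles for $\mathcal{E}(K/F)$ with those for the local Weil extensions $W(K_w/F_v)$ --- transports \emph{verbatim} to our setting. No essentially new idea should be needed: the substitutions listed in the first paragraph are precisely those that occur, each in a well-delimited step of the original argument, and all of them are in place thanks to the standing assumption $H^1(\Gamma_{k'},\mathrm{Jac}_X(\overline{k}))=0$ and the results of Section~2.
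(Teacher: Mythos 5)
Your proposal is correct and follows essentially the same approach as the paper, which simply cites Kottwitz's proof of \cite[Proposition 15.1]{kottwitz2014} and notes that the two preliminary lemmas (vanishing of $H^1$ for simply connected groups, existence of a maximal $F$-torus elliptic at a prescribed finite set of places) supply the inputs needed to transport the argument to the present setting. Your sketch of the torus case via the Tate--Nakayama isomorphism for $(\Z[V_K]_0, K^\times, \alpha_3)$ and the reduction to it via elliptic tori and simply connected $H^1$-vanishing is exactly the structure of Kottwitz's argument that the paper invokes.
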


\begin{proof} 
One only needs to modify the proof of \cite[Proposition 15.1]{kottwitz2014} by using the two lemmas above.
\end{proof}

\begin{coro}
    The diagram 
    \[\begin{tikzcd}
        B(F,G)_{\mathrm{bsc}} \arrow{r} \arrow{d}{\kappa_G} 
        & \bigoplus_{u \in V_F}B(F_u,G)_{\mathrm{bsc}} \arrow{d}{\kappa_G} \\
        A(F,G) \arrow{r} & \bigoplus_{u \in V_F}A(F_v,G)
    \end{tikzcd}\]
    commutes and the vertical arrows are bijections. 
\end{coro}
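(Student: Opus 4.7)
The plan is to derive the corollary by assembling the main local and global theorems together with the finiteness proposition on the total localization map, and then to verify commutativity as a naturality statement for $\kappa_G$.

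First I would check that the two horizontal maps are well-defined into direct sums. For the top row, the earlier proposition shows that the total localization $B(F,G)\to\prod_{u\in V_F}B(F_u,G)$ factors through $\bigoplus_{u\in V_F}B(F_u,G)$. The naturality of the Newton map with respect to localization then implies that an element of $B(F,G)_{\mathrm{bsc}}$, whose Newton point lies in $\mathrm{Hom}_F(\mathbb{D}_F,Z(G))$, localizes to classes whose Newton points lie in $\mathrm{Hom}_{F_u}(\mathbb{D}_{F_u},Z(G))$, i.e.\ to local basic elements. For the bottom row, the map $A(F,G)\to\prod_u A(F_u,G)$ lands in the direct sum because any class is represented by a tensor involving an element of $\Z[V_K]_0$ of finite support, and the $u$-component factors through the projection $\Z[V_K]\to\Z[V_u]$, which vanishes for all $u$ away from the support.

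Next I would establish the vertical bijections. The left vertical arrow is a bijection by Theorem~\ref{global}. The right vertical arrow is the direct sum of the local maps $\kappa_G\colon B(F_u,G)_{\mathrm{bsc}}\to A(F_u,G)$, each of which is a bijection by Theorem~\ref{local} after identifying $A(F_u,G)$ with $(\Lambda_G)_{\Gamma_u}$ via the natural isomorphism $(\Lambda_G\otimes\Z[V_u])_{\Gamma(K/F)}\cong(\Lambda_G)_{\Gamma(K_v/F_u)}$. Since $\kappa_G$ preserves the distinguished element, the direct sum of these local bijections is itself a bijection on direct sums of pointed sets.

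For commutativity of the square, both the global and local $\kappa_G$ are constructed from cup product with their respective fundamental classes, while both localization maps (on $B$ and on $A$) are built from the restriction $\Gamma(K_v/F_u)\hookrightarrow\Gamma(K/F)$ together with the projection $\Z[V_K]_0\to\Z[V_u]$. The square then commutes by the standard compatibility of restriction with cup product, together with the fact that the class $\alpha_3$ of the Tate-Nakayama triple $(\Z[V_K]_0,K^\times,\alpha_3)$ restricts at each $u$ to the local fundamental class that defines the local $\kappa_G$. The main obstacle I expect is the bookkeeping involved in these identifications, especially verifying that the local Newton map and the local $\kappa_G$ line up through the isomorphism $(\Lambda_G\otimes\Z[V_u])_{\Gamma(K/F)}\cong(\Lambda_G)_{\Gamma(K_v/F_u)}$ and that the restriction of $\alpha_3$ behaves as claimed; once these compatibilities are pinned down the rest follows formally from Theorems~\ref{local} and~\ref{global}.
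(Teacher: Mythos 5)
Your proposal is correct and follows essentially the same route as the paper: the vertical bijections come from Theorems \ref{local} and \ref{global}, and the commutativity of the square reduces to compatibility of the global and local Tate--Nakayama constructions under restriction (the paper cites \cite[Lemma 11.7]{kottwitz2014} for exactly this; your sketch of restricting $\alpha_3$ to the local fundamental class and invoking naturality of cup product is the content of that lemma). Your additional remarks on why the horizontal maps land in the direct sums are a sound, if not strictly necessary, supplement, since the paper already established that finiteness in the preceding proposition.
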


\begin{proof}
    It remains to prove the compatibility of the local and global maps $\kappa_G$ with the localization maps. This follows from \cite[Lemma 11.7]{kottwitz2014}.
\end{proof}

\begin{coro}
An element in $\bigoplus_{u \in V_F}B(F_u,G)_{\mathrm{bsc}}$ lies in the image of the localization map \[B(F,G)_{\mathrm{bsc}} \to \bigoplus_{u \in V_F} B(F_u,G)_{\mathrm{bsc}}\] if and only if its image under \[\bigoplus_{u \in V_F}B(F_u,G)_{\mathrm{bsc}} \to \bigoplus_{u \in V_F} (\Lambda_G)_{\Gamma(K_v/F_u)} \to (\Lambda_G)_{\Gamma(K/F)}\] is trivial.
\end{coro}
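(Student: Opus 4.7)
The plan is to use the commutative diagram of the preceding corollary, whose vertical arrows $\kappa_G$ are bijections, to reduce to the analogous statement for $A(F,G)$. Concretely, it suffices to show that the image of the localization map
\[ A(F,G) \to \bigoplus_{u \in V_F} A(F_u,G) = \bigoplus_{u \in V_F} (\Lambda_G)_{\Gamma(K_v/F_u)} \]
coincides with the kernel of the augmentation map $\bigoplus_u (\Lambda_G)_{\Gamma(K_v/F_u)} \to (\Lambda_G)_{\Gamma(K/F)}$ appearing in the statement.

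To obtain this characterization I would begin with the short exact sequence of $\Gamma(K/F)$-modules $0 \to \Z[V_K]_0 \to \Z[V_K] \to \Z \to 0$, tensor it with the free abelian group $\Lambda_G$ (which preserves exactness), and pass to $\Gamma(K/F)$-coinvariants. Right-exactness yields
\[ (\Lambda_G \otimes \Z[V_K]_0)_{\Gamma(K/F)} \to (\Lambda_G \otimes \Z[V_K])_{\Gamma(K/F)} \to (\Lambda_G)_{\Gamma(K/F)} \to 0. \]
A routine Shapiro-type identification, using the decomposition $\Z[V_K] = \bigoplus_u \Z[V_u]$ with $\Gamma(K/F)$ acting transitively on each $V_u$ with stabilizer $\Gamma(K_v/F_u)$, rewrites the middle term as $\bigoplus_u (\Lambda_G)_{\Gamma(K_v/F_u)}$. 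Exactness at the middle then gives precisely the required description of the image.

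What remains is compatibility: the localization map $A(F,G) \to A(F_u,G)$ defined in Section 3.4 agrees, under the Shapiro identification, with the map induced by $\Z[V_K]_0 \hookrightarrow \Z[V_K]$ followed by projection to the $u$-summand; and the map $\bigoplus_u (\Lambda_G)_{\Gamma(K_v/F_u)} \to (\Lambda_G)_{\Gamma(K/F)}$ appearing in the statement is the one induced by the augmentation $\Z[V_K] \to \Z$. Both assertions are essentially built into the definitions recalled earlier.

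The main obstacle is bookkeeping rather than mathematics: verifying that the diagram of the previous corollary extends on the right by an augmentation column $\bigoplus_u A(F_u,G) \to (\Lambda_G)_{\Gamma(K/F)}$ together with the corresponding column for $B$, all commuting compatibly with $\kappa_G$ via Kottwitz's Lemma 11.7. Once this is settled, the bijectivity of $\kappa_G$ transports the $A$-theoretic exact sequence verbatim to the desired Hasse-type statement for $B(F,G)_{\mathrm{bsc}}$.
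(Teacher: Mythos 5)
Your proof is correct and follows the expected route (the paper does not supply a proof for this corollary; it is the argument of Kottwitz's Corollary 15.2, which the introduction cites). One small inaccuracy worth correcting: Borovoi's fundamental group $\Lambda_G$ is \emph{not} free in general --- for example $\Lambda_{\mathrm{PGL}_2}\cong\Z/2$ --- so you cannot justify exactness after tensoring by appealing to freeness of $\Lambda_G$. The conclusion still holds, but the right reason is that the sequence $0\to\Z[V_K]_0\to\Z[V_K]\to\Z\to 0$ is a split short exact sequence of abelian groups (the quotient $\Z$ is free, so the surjection admits a section), hence remains exact after tensoring with any abelian group. With that fix, the rest of the argument --- the Shapiro identification $(\Lambda_G\otimes\Z[V_K])_{\Gamma(K/F)}\cong\bigoplus_u(\Lambda_G)_{\Gamma(K_v/F_u)}$, right-exactness of coinvariants giving exactness at the middle term, identification of the localization map with $\Z[V_K]_0\hookrightarrow\Z[V_K]$ and of the augmentation column with $\Z[V_K]\to\Z$, and transport along the bijections $\kappa_G$ --- is exactly what is needed.
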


\begin{coro}
    The map $B(F,G)_{\mathrm{bsc}} \to A(F,G)$ restricts to a bijection \[H^1(F,G) \to (\Lambda_G \otimes \Z[V_K]_0)_{\Gamma(K/F),\mathrm{tors}}=A(F,G)_{\mathrm{tors}}.\]
\end{coro}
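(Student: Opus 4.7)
The plan is to derive the corollary from Theorem \ref{global} by identifying $H^1(F,G)$ inside $B(F,G)_{\mathrm{bsc}}$ and transferring this description across the bijection $\kappa_G \colon B(F,G)_{\mathrm{bsc}} \xrightarrow{\sim} A(F,G)$.

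First, I would recall from the discussion of basic elements that the inclusion $H^1(F,G) \hookrightarrow B(F,G)$ has image equal to the kernel of the Newton map; since the trivial Newton point is automatically central, this image lies in $B(F,G)_{\mathrm{bsc}}$, and so $H^1(F,G)$ is precisely the fiber over zero of the restricted Newton map
\[\nu_G \colon B(F,G)_{\mathrm{bsc}} \longrightarrow \mathrm{Hom}_F(\mathbb{D}_F, Z(G)) \cong (X_*(Z(G)) \otimes_\Z \Q)^{\Gamma}.\]
Note that the target is a $\Q$-vector space, so in particular torsion-free.

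Next, I would verify, by naturality in $G$ and reduction to the case of an $F$-torus split by $K$, that $\nu_G$ corresponds under $\kappa_G$ to the natural homomorphism
\[\tilde{\nu}_G \colon A(F,G) = (\Lambda_G \otimes \Z[V_K]_0)_{\Gamma(K/F)} \longrightarrow (X_*(Z(G)) \otimes_\Z \Q)^{\Gamma}\]
induced by the canonical map $\Lambda_G \to X_*(Z(G))$ and the norm isomorphism $M_{\Gamma(K/F)} \otimes \Q \cong M^{\Gamma(K/F)} \otimes \Q$ applied to $M = X_*(Z(G)) \otimes \Z[V_K]_0$. The torus case is immediate from Kottwitz's construction of $\kappa_T$ (see \cite[Lemma 4.1]{kottwitz2014}), and the extension to general reductive $G$ mirrors \cite[Section 11]{kottwitz2014}.

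It then remains to prove $\ker(\tilde{\nu}_G) = A(F,G)_{\mathrm{tors}}$. The inclusion $\supseteq$ is automatic because the target of $\tilde{\nu}_G$ is torsion-free, so torsion classes in $A(F,G)$ must map to zero. For the inclusion $\subseteq$, I would reduce, via Borovoi's abelianization exact sequence together with the vanishing of $H^1$ for simply connected groups established in the lemma above, to the case of a torus $T$; here the rationalized map $\tilde{\nu}_T \otimes \Q$ is the isomorphism given by the norm map $(X_*(T) \otimes \Z[V_K]_0)_{\Gamma(K/F)} \otimes \Q \cong (X_*(T) \otimes \Z[V_K]_0)^{\Gamma(K/F)} \otimes \Q$, and is therefore injective. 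Combining with the bijection $\kappa_G$ of Theorem \ref{global} then yields the desired identification $H^1(F,G) \cong A(F,G)_{\mathrm{tors}}$.

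The main obstacle is the compatibility of $\kappa_G$ with the Newton maps in the general reductive case. This is a routine but intricate diagram chase that Kottwitz settles by passage to tori; the arguments transfer to our setting without change because the Tate--Nakayama triples and the construction of the extensions $\mathcal{E}(K/F)$ in the preceding subsections are identical in form to his.
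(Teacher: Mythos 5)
The paper offers no explicit proof of this corollary (it is the analogue of Kottwitz's Corollary~15.2, which the paper implicitly invokes), so I can only assess your argument on its own terms. The overall strategy -- identify $H^1(F,G)$ inside $B(F,G)_{\mathrm{bsc}}$ as the kernel of the Newton map, transport the Newton map through $\kappa_G$ to a map $\tilde\nu_G$ out of $A(F,G)$ with torsion-free target, and observe that $\ker\tilde\nu_G = A(F,G)_{\mathrm{tors}}$ because $\tilde\nu_G$ becomes injective after tensoring with $\Q$ -- is the correct one and is essentially what Kottwitz does.

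However, your write-up is internally inconsistent about what the target of the global Newton map actually is, and as stated it conflates the local and global pictures. You first identify the target of $\nu_G$ on basic elements as $\mathrm{Hom}_F(\mathbb{D}_F,Z(G))\cong (X_*(Z(G))\otimes\Q)^\Gamma$; that is the \emph{local} identification, valid because there $X^*(\mathbb{D}_F)=\Q$. In the global situation of this section, $\mathbb{D}_F=\varprojlim\mathbb{D}_{K/F}$ with $X^*(\mathbb{D}_{K/F})=\Z[V_K]_0$, so $\mathrm{Hom}_F(\mathbb{D}_F,Z(G))$ is built from $\varinjlim\Z[V_K]_0$ and is not simply $(X_*(Z(G))\otimes\Q)^\Gamma$. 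Your own torus calculation at the end gives the correct shape, $(X_*(T)\otimes\Z[V_K]_0)^{\Gamma(K/F)}\otimes\Q$, which contradicts the earlier displayed target of $\tilde\nu_G$. This should be fixed: state the global target correctly as a filtered colimit of $[X_*(Z(G))\otimes\Z[V_K]_0]^{\Gamma(K/F)}$, note that it is torsion-free (being a colimit of subgroups of free abelian groups), and then the two key assertions -- torsion maps to zero, and $\tilde\nu_G\otimes\Q$ is injective via the norm-map isomorphism between coinvariants and invariants after rationalization together with $\Lambda_G\otimes\Q \cong X_*(Z(G))\otimes\Q$ -- do go through. The reduction to tori via Borovoi abelianization and Lemma on vanishing of $H^1$ for simply connected groups is a legitimate route, though once the global target is written correctly one can also argue directly by rationalization without invoking the abelianization sequence.
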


The bijection gives $H^1(F,G)$ the structure of an abelian group.

\subsection{Concluding Remarks}

The approach of Borovoi-Kaletha \cite{borkal} can probably be modified to prove the corollary above. In this paper the assumption made on the curve $X$ was crucial, in order to get global Tate-Nakayama triples. We cannot say anything about what happens if the assumption does not hold, in particular, if the Brauer group of the $k$-curve $X$ is not trivial.

\bibliographystyle{plain}
\bibliography{biblio}

\end{document}